\newtheorem{assumption}{Assumption}
\newtheorem{algorithm}{Algorithm}
\newtheorem{remark}{Remark}
\newtheorem{example}{Example}
\newenvironment{algorithminit}[1]{\ \\{\em Initialization}: #1\begin{list}{\labelenumi}{\topsep0in\itemsep0in\parsep0in\labelwidth1in\usecounter{enumi}}}{\setcounter{enumii}{\value{enumi}}\end{list}}
\newenvironment{algorithmoper}[1]{{\em Operation}: #1\begin{list}{\labelenumi}{\topsep0in\itemsep0in\parsep0in\labelwidth1in\usecounter{enumi}\setcounter{enumi}{\value{enumii}}}}{\hfill$\blacksquare$\end{list}}
\title{Convergence analysis of approximate primal solutions in dual first-order methods\footnotemark[4] \thanks{This work has been sponsored in part by the Swedish Research Council and the Swedish Foundation for Strategic Research.}} 
\author{Jie Lu\footnotemark[2]\ 
\and Mikael Johansson\footnotemark[3]}
\begin{document}
\maketitle
\renewcommand{\thefootnote}{\fnsymbol{footnote}}
\footnotetext[2]{Department of Signals and Systems, Chalmers University of Technology, SE-412 96 Gothenburg, Sweden.(\email{jielu@chalmers.se})}
\footnotetext[3]{Department of Automatic Control, ACCESS Linnaeus Center, School of Electrical Engineering, KTH Royal Institute of Technology, SE-100 44 Stockholm, Sweden.(\email{mikaelj@kth.se})}
\footnotetext[4]{A shorter, conference version of this paper has been published in the Proceedings of IEEE Conference on Decision and Control, pp.6861-6867, Florence, Italy, 2013.}


\begin{abstract}
Dual first-order methods are powerful techniques for large-scale convex optimization. Although an extensive research effort has been devoted to studying their convergence properties, explicit convergence rates for the primal iterates have only been established under global Lipschitz continuity of the dual gradient. This is a rather restrictive assumption that does not hold for several important classes of problems. In this paper, we demonstrate that primal convergence rate guarantees can also be obtained when the dual gradient is only locally Lipschitz. The class of problems that we analyze admits general convex constraints including nonlinear inequality, linear equality, and set constraints. As an approximate primal solution, we take the minimizer of the Lagrangian, computed when evaluating the dual gradient. We derive error bounds for this approximate primal solution in terms of the errors of the dual variables, and establish convergence rates of the dual variables when the dual problem is solved using a projected gradient or fast gradient method. By combining these results, we show that the suboptimality and infeasibility of the approximate primal solution at iteration $k$ are no worse than $O(1/\sqrt{k})$ when the dual problem is solved using a projected gradient method, and $O(1/k)$ when a fast dual gradient method is used.

\end{abstract}

\begin{keywords}dual optimization, \and first-order methods, \and primal convergence\end{keywords}

\begin{AMS}\end{AMS}

\pagestyle{myheadings}
\thispagestyle{plain}

\section{Introduction}
Lagrangian duality is a widely-used approach in large-scale optimization, especially when there are a few constraints that complicate an otherwise simple problem \cite{Lasdon70, Bertsekas99}. Although many first-order methods can be applied to solve such problems directly in the primal space, the iteration cost can be very high since the projection onto the constraint set is often computationally difficult \cite{Nesterov04}. The corresponding dual problem has a more desirable structure: the dual constraint set has a simple form and the (sub)gradient of the dual function is relatively easy to evaluate. In addition, the dual function is often additive and suitable for distributed implementation, which has been exploited in a wide range of recent applications, including communication systems \cite{Low99, XiaoL04b}, large-scale control \cite{Giselsson13}, and multi-agent systems \cite{Nedic10b}. 

There are many practical and theoretical subtleties in using dual optimization methods to generate optimal solutions to the engineering problems cited above. First, one needs to ensure that the dual optimal value agrees with the primal optimal value (\emph{i.e.}, that there is no duality gap). For convex optimization problems, this can be done by verifying Slater's constraint qualifications \cite{Bertsekas99}. 
Then, one typically needs to guarantee that the iterates generated by the dual optimization method converge to a dual optimum, which is not always true. For instance, the subgradient method with constant step-size achieves suboptimality only. Further, for most applications it is desirable to construct approximate primal solutions (representing the actual decisions to implement) from the dual iterates. Whether the approximate primal solutions converge to a primal optimal solution or not is often of great practical concern. Moreover, to be able to assess solution times and understand how they depend on problem data, it is preferable to estimate how quickly the solution converges. This motivates research on on-line construction of approximate primal solutions and studying their convergence properties.

A number of results on the convergence properties of approximate primal solutions have been reported in the literature~\cite{Larsson99,Nedic09b,Devolder12,Giselsson13,Patrinos13,Beck14,Necoara14,Nedelcu14}. At one extreme are results on non-smooth convex problems with nonlinear constraints, \emph{e.g.}~\cite{Nedic09b,Larsson99}, where the corresponding dual function is also non-smooth in general. For such problems, one typically applies the subgradient method to the dual problem and forms running averages of the generated primal iterates to construct an approximate primal solution. Such an approximate primal solution converges asymptotically to the primal optimal set with diminishing step-sizes \cite{Larsson99} and has guaranteed bounds on suboptimality and infeasibility when a constant step-size is used \cite{Nedic09b}. At the other extreme are problems for which the dual function is differentiable and has globally Lipschitz continuous gradient over the entire dual feasible set, \emph{e.g.}~\cite{Devolder12,Giselsson13,Patrinos13,Beck14,Necoara14,Nedelcu14}. To ensure differentiability of the dual function, one often needs to assume strong convexity of the objective function \cite{Giselsson13,Patrinos13,Beck14,Necoara14} or approach the dual problem using an augmented Lagrangian \cite{Devolder12, Nedelcu14}. To make the dual gradient globally Lipschitz, the references cited above typically require the inequality and equality constraints to be linear. One exception is~\cite{Necoara14} that allows for nonlinear inequality constraints, but not equality constraints. However, \cite{Necoara14} assumes that both the objective and the inequality constraint functions are twice differentiable and that the Jacobian of the constraint functions is element-wise bounded. The globally Lipschitz dual gradient not only simplifies analysis but also allows the application of dual gradient and fast gradient methods (\emph{e.g.}, \cite{Nesterov04,Tseng08,Beck09}) that achieve sublinear convergence rates for the dual iterates. This leads to sublinear convergence rates of the approximate primal solution, be it either the primal iterates \cite{Devolder12,Giselsson13,Beck14} or their running average \cite{Patrinos13,Necoara14,Nedelcu14}.

In this paper, we consider a general class of convex optimization problems that covers the less explored middle ground between these two extremes. In particular, we focus on a class of convex optimization problems with a strongly convex but not necessarily differentiable objective function. We allow the problems to have all three types of convex constraints: nonlinear inequalities, linear equalities, and set constraints, while the related references \cite{Larsson99,Nedic09b,Devolder12,Giselsson13,Patrinos13,Beck14,Necoara14,Nedelcu14} tackle problems in the absence of either nonlinear constraints or equality constraints. This problem class leads to a differentiable dual function with \emph{locally} Lipschitz gradient on the dual feasible set and generalizes the problems with \emph{globally} Lipschitz dual gradient considered in \cite{Giselsson13,Patrinos13,Beck14,Necoara14}.

For this problem class, we consider the unique minimizer of the Lagrangian for given dual variables as an approximate primal solution and relate the errors of this approximate primal solution in primal optimality and feasibility to those of the dual variables in dual optimality. Based on such relationships, we study the convergence properties of the approximate primal solution when the dual variables are generated from the application of the classical projected gradient and fast gradient methods to the dual problem. Specifically, by imposing mild assumptions on the smoothness of the inequality constraint functions, we construct a sufficient condition on the step-size to guarantee convergence of the dual iterates generated by the projected dual gradient method and prove that they converge sublinearly at a rate of order $O(1/k)$. It is worthwhile to mention that this is a new result, as the existing results on the $O(1/k)$ convergence rate of the projected gradient method are established under global Lipschitz continuity of the objective gradient, while for our problem, the dual gradient is only {\em locally} Lipschitz on the dual feasible set. This leads to one of our main results, which states that the primal iterates (\emph{i.e.}, our approximate primal solution at each iteration) converge to optimality and feasibility at a rate no worse than $O(1/\sqrt{k})$. 
By assuming boundedness of the subgradients of the inequality constraint functions, we show that the fast gradient methods in \cite{Tseng08,Beck09} can be applied to solve the dual problem and guarantee the $O(1/k^2)$ convergence rate of the dual iterates. As a result, the convergence rates of the primal iterates in both optimality and feasibility are improved to $O(1/k)$. 

The paper is organized as follows: Section~\ref{sec:probform} gives a formal problem statement, while Section~\ref{sec:PEB} establishes bounds for the error of the approximate primal solution in terms of errors of the dual variables. Convergence rate bounds for the dual and primal iterates in several dual first-order methods are derived in Section~\ref{sec:CPI}. Section~\ref{sec:numeexam} uses simulations to compare the practical performance of different choices of approximate primal solutions in various dual first-order methods. 
Finally, Section~\ref{sec:concl} concludes the paper. The proofs are in the appendix. 

\subsection{Notation}

The following notation is adopted throughout the paper: Let $\mathbb{R}^n_+$ and $\mathbb{R}^n_-$ be the set of nonnegative and negative vectors in $\mathbb{R}^n$, respectively. For a vector $x\in\mathbb{R}^n$, let $x^{(i)}\in\mathbb{R}$, $i=1,2,\ldots,n$ denote the $i$th element of $x$ and $x^{(i:j)}\in\mathbb{R}^{j-i+1}$, $1\le i\le j\le n$ the vector consisting of the $i$th, $(i+1)$th, $\ldots$, $j$th elements of $x$. In addition, for any $x,y\in\mathbb{R}^n$, let $\max\{x,y\}$ be the element-wise maximum operation, \emph{i.e.}, $(\max\{x,y\})^{(i)}=\max\{x^{(i)},y^{(i)}\}$ $\forall i\in\{1,\ldots,n\}$. We use $\|\cdot\|$, $\|\cdot\|_1$, $\|\cdot\|_\infty$, and $\|\cdot\|_F$ to represent the Euclidean, $\ell_1$, infinity, and Frobenius norm, respectively. For any matrix $A\in\mathbb{R}^{p\times n}$, let $\sigma_{\max}(A)=\sqrt{\lambda_{\max}(A^T A)}$ and $\sigma_{\min}(A)=\max\{\sqrt{\lambda_{\min}(AA^T)},\sqrt{\lambda_{\min}(A^T A)}\}$, where $\lambda_{\max}(\cdot)$ and $\lambda_{\min}(\cdot)$ represent the largest and smallest eigenvalues of a real symmetric matrix. We allow $A$ to have zero dimension, \emph{i.e.}, $p=0$ or $n=0$, in which cases we let $\sigma_{\max}(A)=0$. 
For any function $h:\mathbb{R}^n\rightarrow\mathbb{R}$, let $\partial h(x)\subset\mathbb{R}^n$ be its subdifferential at $x\in\mathbb{R}^n$. If $h$ is differentiable at $x$, then $\partial h(x)=\{\nabla h(x)\}$, where $\nabla h(x)\in\mathbb{R}^n$ is the gradient of $h$ at $x$ and its $i$th element is represented by $\nabla^{(i)}h(x)$. For any set $Q\subseteq\mathbb{R}^n$, let $\operatorname{relint}Q$ be its relative interior, $\operatorname{conv}Q$ its convex hull, $\operatorname{diam}(Q)$ its diameter, $|Q|$ its cardinality, and $\mathcal{P}_Q[\cdot]$ the projection onto $Q$. 

\section{Problem formulation}\label{sec:probform}

We consider the following optimization problem with inequality, equality, and set constraints:
\begin{align}
\begin{array}{ll}\underset{x\in\mathbb{R}^n}{\mbox{minimize}} & f(x)\\ \operatorname{subject\,to} & g^{(i)}(x)\le0,\quad i=1,2,\ldots,m,\\ &Ax+b=0,\\ &x\in X.\end{array}\label{eq:generalprimalprob}
\end{align}
Here, $f:\mathbb{R}^n\rightarrow\mathbb{R}$ is the objective function, $g^{(i)}:\mathbb{R}^n\rightarrow\mathbb{R}$, $\forall i\in\{1,2,\ldots,m\}$ represent the nonlinear inequality constraint functions, $A\in\mathbb{R}^{p\times n}$ and $b\in\mathbb{R}^p$ encode the linear equality constraints, and $X\subseteq\mathbb{R}^n$ is a closed and convex set. In addition, let the following assumption hold:

\begin{assumption}\label{asm:generalprimalprob}
Problem~\eqref{eq:generalprimalprob} satisfies the following:
\begin{enumerate}
\item[(a)] The objective function $f$ is strongly convex over $X$ with convexity parameter $\theta>0$, i.e., $f(y)-f(x)-\tilde{\nabla} f(x)^T(y-x)\ge\frac{\theta}{2}\|x-y\|^2$, $\forall x,y\in X$, $\forall\tilde{\nabla}f(x)\in
\partial f(x)$.\footnote{$f$ is not necessarily differentiable. For instance, $f$ could be a quadratic function plus an $\ell_1$ norm.}
\item[(b)] Each $g^{(i)}$, $i\in\{1,2,\ldots,m\}$ is convex over $X$ and satisfies a Lipschitz condition on $X$: $\|g^{(i)}(x)-g^{(i)}(y)\|\le L_i\|x-y\|$ $\forall x,y\in X$ for some $L_i>0$.
\item[(c)] There exists $\tilde{x}\in\operatorname{relint} X$ such that $g^{(i)}(\tilde{x})<0$ $\forall i\in\{1, 2, \ldots, m\}$ and $A\tilde{x}+b=0$.
\item[(d)] The number of inequality and equality constraints is not zero, \emph{i.e.}, $m+p\neq0$. If $p\neq0$, then $A$ is not a zero matrix.
\end{enumerate}
\end{assumption}

Assumptions~\ref{asm:generalprimalprob}(a),~\ref{asm:generalprimalprob}(b), and~\ref{asm:generalprimalprob}(c) guarantee that there is a unique optimal solution $x^\star$ to problem~\eqref{eq:generalprimalprob} and that the optimal value $f^\star=f(x^\star)$ is finite. In addition, they ensure that problem~\eqref{eq:generalprimalprob} has no duality gap when dualizing the inequality and equality constraints, \emph{i.e.}, $f^\star$ is equal to the optimal value $d^\star$ of the corresponding dual problem, and that the dual optimal set $D^\star$ is nonempty \cite[Prop. 5.3.2]{Bertsekas99}. Note that we only require the convexity and Lipschitz continuity in Assumptions~\ref{asm:generalprimalprob}(a) and~\ref{asm:generalprimalprob}(b) to hold over $X$, and not globally over the entire $\mathbb{R}^n$.

To formulate the dual problem of \eqref{eq:generalprimalprob}, we first introduce the Lagrangian function $\mathcal{L}:\mathbb{R}^n\times\mathbb{R}^{m+p}\rightarrow\mathbb{R}$ associated with~\eqref{eq:generalprimalprob}:
\begin{align*}
\mathcal{L}(x,u)=f(x)+\sum_{i=1}^mu^{(i)}g^{(i)}(x)+\Bigl(u^{(m+1:m+p)}\Bigr)^T(Ax+b).
\end{align*}
Given the Lagrangian $\mathcal{L}$, the dual function $d:\mathbb{R}^{m+p}\rightarrow\mathbb{R}$ can be expressed as
\begin{align}
d(u)&=\min_{x\in X}\mathcal{L}(x,u)\nonumber\\
&=f(\bar{x}(u))+\sum_{i=1}^mu^{(i)}g^{(i)}(\bar{x}(u))+\Bigl(u^{(m+1:m+p)}\Bigr)^T(A\bar{x}(u)+b),\label{eq:generaldualfunction}
\intertext{where}
\bar{x}(u)&\in\operatorname{arg\;min}_{x\in X}\mathcal{L}(x,u),\nonumber
\end{align}
and the Lagrange dual problem of~\eqref{eq:generalprimalprob} is
\begin{align}
\begin{array}{ll}\underset{u\in\mathbb{R}^{m+p}}{\mbox{maximize}} & d(u)\\ \operatorname{subject\,to} & u\in D\triangleq\{u\in\mathbb{R}^{m+p}:u^{(1:m)}\in\mathbb{R}^m_+\}.
\end{array}\label{eq:generaldualprob}
\end{align}
Since $d$ is concave, the dual problem (\ref{eq:generaldualprob}) is a convex optimization problem. Moreover, for every $u\in D$, $\mathcal{L}(\cdot,u)$ is strongly convex over $X$, so $\bar{x}(u)$ exists and is unique. Furthermore, $\bar{x}(u)=x^\star$ if $u=u^\star$ for some dual optimal solution $u^\star\in D^\star$ \cite[Prop. 6.1.1]{Bertsekas03}. This makes $\bar{x}(u)$ a legitimate candidate for an \textit{approximate primal solution} to~\eqref{eq:generalprimalprob} based on the dual variable $u\in D$. 

Next, we establish the boundedness of $\bar{x}(u)$ and then the differentiability of $d$:

\begin{lemma}\label{lem:xbounded}
Consider problem~\eqref{eq:generalprimalprob} under Assumption~\ref{asm:generalprimalprob}. Then, for any compact set $S\subset D$, the set $\{\bar{x}(u):u\in S\}$ is bounded.
\end{lemma}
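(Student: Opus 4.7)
The plan is to anchor the argument at the Slater point $\tilde{x} \in \operatorname{relint} X$ from Assumption~\ref{asm:generalprimalprob}(c), exploit strong convexity of the Lagrangian in $x$, and combine it with the defining optimality of $\bar{x}(u)$ to force a quadratic bound on $\|\bar{x}(u) - \tilde{x}\|$. Because each summand $u^{(i)} g^{(i)}$ is convex on $X$ (using $u^{(i)} \ge 0$) and the equality-constraint term is affine in $x$, the map $x \mapsto \mathcal{L}(x,u)$ inherits from $f$ strong convexity on $X$ with modulus $\theta$, uniformly in $u \in D$.

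First, I would fix once and for all subgradients $\tilde{\nabla} f(\tilde{x}) \in \partial f(\tilde{x})$ and $\tilde{\nabla} g^{(i)}(\tilde{x}) \in \partial g^{(i)}(\tilde{x})$ (both exist at a relative-interior point of $X$) and define
\[
v(u) \;=\; \tilde{\nabla} f(\tilde{x}) \;+\; \sum_{i=1}^m u^{(i)}\,\tilde{\nabla} g^{(i)}(\tilde{x}) \;+\; A^T u^{(m+1:m+p)},
\]
which is a subgradient of $\mathcal{L}(\cdot,u)$ at $\tilde{x}$ for every $u \in D$ and is affine (hence continuous) in $u$. Applying the strong-convexity subgradient inequality at base point $\tilde{x}$, evaluated at $\bar{x}(u) \in X$, yields
\[
\mathcal{L}(\bar{x}(u),u) \;\ge\; \mathcal{L}(\tilde{x},u) + v(u)^T(\bar{x}(u) - \tilde{x}) + \tfrac{\theta}{2}\|\bar{x}(u) - \tilde{x}\|^2.
\]

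Next, since $\bar{x}(u)$ minimizes $\mathcal{L}(\cdot,u)$ over $X$ and $\tilde{x} \in X$, the reverse inequality $\mathcal{L}(\bar{x}(u),u) \le \mathcal{L}(\tilde{x},u)$ holds. Subtracting $\mathcal{L}(\tilde{x},u)$ from both sides eliminates the zeroth-order term, leaving $0 \ge v(u)^T(\bar{x}(u)-\tilde{x}) + \tfrac{\theta}{2}\|\bar{x}(u)-\tilde{x}\|^2$, whereupon Cauchy--Schwarz delivers $\|\bar{x}(u) - \tilde{x}\| \le (2/\theta)\|v(u)\|$. Since $v(\cdot)$ is continuous and $S$ is compact, $\sup_{u \in S}\|v(u)\|$ is finite, and the triangle inequality closes the argument.

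There is no serious obstacle here; the main subtlety is committing to a single choice of subgradients at $\tilde{x}$ \emph{before} assembling $v(u)$, which keeps $v(u)$ affine in $u$ and lets compactness of $S$ do the work. It is worth noting that the strict Slater inequalities $g^{(i)}(\tilde{x}) < 0$ and the equality $A\tilde{x}+b=0$ are not actually needed in this lemma; only the existence of some reference point in $X$ together with strong convexity of $f$ over $X$ is used.
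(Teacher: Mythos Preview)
Your argument is correct and is in fact more direct than the paper's. You anchor at a fixed reference point $\tilde{x}\in X$, build an explicit subgradient $v(u)$ of $\mathcal{L}(\cdot,u)$ at $\tilde{x}$ that depends affinely on $u$, and combine the strong-convexity inequality with the optimality of $\bar{x}(u)$ to obtain the quantitative bound $\|\bar{x}(u)-\tilde{x}\|\le(2/\theta)\|v(u)\|$; compactness of $S$ then finishes the job immediately. The paper instead fixes some $u_0\in S$, uses strong convexity of $\mathcal{L}(\cdot,u_0)$ to show the sublevel set $\{x\in X:\mathcal{L}(x,u_0)\le\max_{u\in S}d(u)\}$ is compact, controls $\mathcal{L}(x,u)$ versus $\mathcal{L}(x,u_0)$ on that set via a Lipschitz-in-$u$ estimate, and then derives a contradiction from assumed unboundedness. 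Your route avoids the sublevel-set detour and the contradiction, and yields an explicit radius for $\{\bar{x}(u):u\in S\}$; the paper's route has the mild advantage of never needing to name a subgradient at the reference point, working only with function values. Your remark that the Slater inequalities are not actually used here is also accurate: any point of $X$ at which subgradients of $f$ and the $g^{(i)}$ exist (e.g., any point of $\operatorname{relint}X$) would serve equally well as the anchor.
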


\begin{proof}
See Appendix~\ref{ssec:prooflemxbounded}.
\end{proof}

With Lemma~\ref{lem:xbounded} and Danskin's Theorem \cite{Bertsekas99}, it can be shown that the dual function $d$ is differentiable at every point in $D$. Moreover, for any $u\in D$,
\begin{align}
\nabla d(u)=[g^{(1)}(\bar{x}(u)),\ldots,g^{(m)}(\bar{x}(u)),(A\bar{x}(u)+b)^T]^T.\label{eq:generalgradd}
\end{align}

\begin{remark}Equations~\eqref{eq:generaldualfunction} and~\eqref{eq:generalgradd} suggest that the primal function value at $\bar{x}(u)$, \emph{i.e.,} $f(\bar{x}(u))$, can be expressed in terms of the dual variable $u\in D$ and the dual function $d$ as follows:
\begin{align}
f(\bar{x}(u))=d(u)-\nabla d(u)^Tu,\quad\forall u\in D.\label{eq:f=d-ndu}
\end{align}
This relationship will be essential in the results that we derive shortly.
\end{remark}

In the above setting, the goal of this paper is to (a) quantify how close the approximate primal solution $\bar{x}(u)$ is to optimality and feasibility for any given dual feasible point $u\in D$, and (b) to derive the convergence \emph{rate} of $\bar{x}(u)$ when the dual problem~\eqref{eq:generaldualprob} is solved using some common first-order methods. We investigate optimality in terms of both the distance to the optimizer
\begin{align*}
	\Vert \bar{x}(u)-x^{\star} \Vert
\end{align*}
and the error in primal objective value 
\begin{align*}
	\vert f(\bar{x}(u))-f^{\star}\vert
\end{align*}
while primal infeasibility is captured by the quantity
\begin{align*}
\Delta(\bar{x}(u))=\Bigl(\|A\bar{x}(u)+b\|^2+\sum_{i=1}^m\bigl(\max\{0,g^{(i)}(\bar{x}(u))\}\bigr)^2\Bigr)^{1/2}.
\end{align*}

\subsection{Comparison with related work}

It is instructive to compare Problem~\eqref{eq:generalprimalprob} with the problem classes considered in the related works \cite{Larsson99,Nedic09b,Devolder12,Giselsson13,Patrinos13,Beck14,Necoara14,Nedelcu14} that also study primal convergence in dual first-order methods.

First of all, note that~\eqref{eq:generalprimalprob} allows for all three types of standard convex constraints (convex inequality, linear equality, and convex set constraints), while \cite{Larsson99,Nedic09b,Devolder12,Giselsson13,Patrinos13,Beck14,Necoara14,Nedelcu14} do not. The constraints in \cite{Devolder12,Giselsson13,Patrinos13,Beck14,Nedelcu14} must be linear, and although \cite{Larsson99,Nedic09b,Necoara14} consider nonlinear inequality constraints, they do not allow for linear equality constraints.

Like our Assumption~\ref{asm:generalprimalprob}, references \cite{Giselsson13,Patrinos13,Beck14,Necoara14} also assume strong convexity of the objective function $f$. Clearly, problem~\eqref{eq:generalprimalprob} generalizes the linearly constrained problems considered in \cite{Giselsson13,Patrinos13,Beck14}. In addition, the problem class with nonlinear inequality constraints in \cite{Necoara14} requires that the objective and the inequality constraint functions are twice differentiable and that the Jacobian of the inequality constraint functions is element-wise bounded. These are more restrictive than Assumption~\ref{asm:generalprimalprob}.

Strong convexity of the objective function is relaxed to convexity in \cite{Larsson99,Nedic09b,Devolder12,Nedelcu14}. In \cite{Larsson99,Nedic09b}, the dual function is non-differentiable and therefore only the subgradient method can be applied to the dual, which explains the lack of convergence rate gurantees. In \cite{Devolder12,Nedelcu14}, quadratic augmented Lagrangians are used to obtain a differentiable dual function. Nevertheless, they still require the constraint set $X$ to be compact.

\section{Primal errors in optimality and feasibility}\label{sec:PEB}

In this section, we bound the errors of the approximate primal solution $\bar{x}(u)\in X$ in optimality and feasibility in terms of the errors of the dual variable $u\in D$. To present our first result, we introduce the following notation: for any $u\in D$, let
\begin{align}
\gamma(u)&=\frac{\sqrt{m+1}}{\theta}\max\Bigl\{\sigma_{\max}(A),\sup_{q\in G(u)}\|q\|\Bigr\},\label{eq:gammau}
\intertext{where}
G(u)&=\bigcup_{i=1}^m\partial g^{(i)}(\bar{x}(u))\subset\mathbb{R}^n.\label{eq:Gu}
\end{align}
Since $G(u)$ is a compact set \cite[Prop. 4.2.1]{Bertsekas03}, $0\le\gamma(u)<\infty$. Also, if $\gamma(u')=0$ for some $u'\in D$, then $\bar{x}(u)=x^\star$ $\forall u\in D$.\footnote{To see this, note that $\gamma(u')=0$ implies $p=0$ and $\partial g^{(i)}(\bar{x}(u'))=\{0\}$ $\forall i\in\{1,\ldots,m\}$. Hence, $\partial_x\mathcal{L}(\bar{x}(u'),u)=\partial f(\bar{x}(u'))$ $\forall u\in D$, where $\partial_x\mathcal{L}$ represents the subdifferential of $\mathcal{L}$ with respect to the first argument. Since $\bar{x}(u')$ minimizes $\mathcal{L}(\cdot,u')$ over $X$, there exists $\tilde{\nabla}f(\bar{x}(u'))\in\partial f(\bar{x}(u'))$ such that $\tilde{\nabla}f(\bar{x}(u'))^T(x-\bar{x}(u'))\ge0$. Therefore, $\bar{x}(u')=\operatorname{arg\;min}_{x\in X}\mathcal{L}(x,u)=\bar{x}(u)$ $\forall u\in D$ and thus $\bar{x}(u)=x^\star$ $\forall u\in D$.} This means that the primal optimal solution $x^\star$ can be simply found by arbitrarily picking $u\in D$ and computing $\bar{x}(u)$. Hence, in the rest of the paper, we exclude this trivial case and assume $\gamma(u)>0$ $\forall u\in D$.

Then, consider the following lemma:

\begin{lemma}\label{lem:xx<=cuu}
Consider problem~\eqref{eq:generalprimalprob} under Assumption~\ref{asm:generalprimalprob}. For any $u,v\in D$,
\begin{align}
\|\bar{x}(u)-\bar{x}(v)\|\le\min\{\gamma(u),\gamma(v)\}\|u-v\|,\label{eq:xx<=uuloose}
\end{align}
where $\gamma(u),\gamma(v)\in(0,\infty)$ are defined in \eqref{eq:gammau}.
\end{lemma}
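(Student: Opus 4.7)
The plan is to exploit first-order optimality of $\bar{x}(u)$ and $\bar{x}(v)$ as minimizers of the Lagrangian on the closed convex set $X$, combined with $\theta$-strong convexity of $f$ and monotonicity of the subdifferentials of the $g^{(i)}$. By the standard subgradient optimality condition on a convex set, I can pick $\tilde{\nabla}f(\bar{x}(u))\in\partial f(\bar{x}(u))$ and $\tilde{\nabla}g^{(i)}(\bar{x}(u))\in\partial g^{(i)}(\bar{x}(u))$ with
\[
\Bigl[\tilde{\nabla}f(\bar{x}(u))+\sum_{i=1}^m u^{(i)}\tilde{\nabla}g^{(i)}(\bar{x}(u))+A^T u^{(m+1:m+p)}\Bigr]^T(\bar{x}(v)-\bar{x}(u))\ge 0,
\]
together with the analogous inequality obtained by swapping $u$ and $v$ and reversing the test direction.

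Adding the two optimality inequalities produces three groups of terms. The $f$-group collapses into $-[\tilde{\nabla}f(\bar{x}(u))-\tilde{\nabla}f(\bar{x}(v))]^T(\bar{x}(u)-\bar{x}(v))\le-\theta\|\bar{x}(u)-\bar{x}(v)\|^2$ by Assumption~\ref{asm:generalprimalprob}(a). For each $g^{(i)}$-group I would use the decomposition $u^{(i)}\tilde{\nabla}g^{(i)}(\bar{x}(u))-v^{(i)}\tilde{\nabla}g^{(i)}(\bar{x}(v))=u^{(i)}[\tilde{\nabla}g^{(i)}(\bar{x}(u))-\tilde{\nabla}g^{(i)}(\bar{x}(v))]+(u^{(i)}-v^{(i)})\tilde{\nabla}g^{(i)}(\bar{x}(v))$; the first piece, dotted with $\bar{x}(v)-\bar{x}(u)$, is nonpositive because $u^{(i)}\ge 0$ and monotonicity of $\partial g^{(i)}$ gives $[\tilde{\nabla}g^{(i)}(\bar{x}(u))-\tilde{\nabla}g^{(i)}(\bar{x}(v))]^T(\bar{x}(u)-\bar{x}(v))\ge 0$, so it can be discarded. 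The $A$-group contributes $(u^{(m+1:m+p)}-v^{(m+1:m+p)})^T A(\bar{x}(v)-\bar{x}(u))$ directly. Collecting what survives yields
\[
\theta\|\bar{x}(u)-\bar{x}(v)\|^2\le\sum_{i=1}^m(u^{(i)}-v^{(i)})\tilde{\nabla}g^{(i)}(\bar{x}(v))^T(\bar{x}(v)-\bar{x}(u))+(u^{(m+1:m+p)}-v^{(m+1:m+p)})^T A(\bar{x}(v)-\bar{x}(u)).
\]

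What remains is Cauchy--Schwarz bookkeeping. Term by term, together with $\|\tilde{\nabla}g^{(i)}(\bar{x}(v))\|\le\sup_{q\in G(v)}\|q\|$, the nonlinear sum is at most $\sqrt{m}\,\sup_{q\in G(v)}\|q\|\cdot\|u^{(1:m)}-v^{(1:m)}\|\cdot\|\bar{x}(u)-\bar{x}(v)\|$, and the linear piece is at most $\sigma_{\max}(A)\,\|u^{(m+1:m+p)}-v^{(m+1:m+p)}\|\cdot\|\bar{x}(u)-\bar{x}(v)\|$. A second Cauchy--Schwarz on the pair $(\|u^{(1:m)}-v^{(1:m)}\|,\|u^{(m+1:m+p)}-v^{(m+1:m+p)}\|)$, followed by the elementary bound $m\alpha^2+\beta^2\le(m+1)\max\{\alpha,\beta\}^2$, assembles everything into $\sqrt{m+1}\,\max\{\sigma_{\max}(A),\sup_{q\in G(v)}\|q\|\}\,\|u-v\|\,\|\bar{x}(u)-\bar{x}(v)\|$. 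Cancelling one factor of $\|\bar{x}(u)-\bar{x}(v)\|$ (the case when it vanishes is trivial) delivers $\|\bar{x}(u)-\bar{x}(v)\|\le\gamma(v)\,\|u-v\|$.

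By symmetrically swapping the splitting---putting $v^{(i)}$ in front of the monotonicity bracket and leaving $\tilde{\nabla}g^{(i)}(\bar{x}(u))$ as the leftover subgradient---the same chain of bounds gives $\|\bar{x}(u)-\bar{x}(v)\|\le\gamma(u)\,\|u-v\|$, and taking the minimum finishes the proof. The delicate point I anticipate is aligning the decomposition of the cross subgradient terms with the sign constraints $u^{(i)},v^{(i)}\ge 0$ so that the monotonicity contribution is nonpositive and can be dropped; once that is in place, the appearance of $\sqrt{m+1}$ rather than a larger constant is dictated by aggregating the $m$ scalar inequality multipliers and the single vector equality multiplier through one Cauchy--Schwarz step.
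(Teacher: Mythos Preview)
Your proposal is correct and follows essentially the same approach as the paper: both start from the first-order optimality conditions for $\bar{x}(u)$ and $\bar{x}(v)$ on $X$, add them, use strong convexity of $f$ on the left, use the same decomposition of the $g^{(i)}$ cross terms together with $u^{(i)}\ge 0$ and monotonicity (equivalently, convexity of $g^{(i)}$) to drop the bracket term, and then finish with Cauchy--Schwarz and symmetry. The only cosmetic difference is in the final bookkeeping: the paper pulls out $\max\{\sigma_{\max}(A),\sup_{q\in G(v)}\|q\|\}$ first and applies Cauchy--Schwarz once to the $m+1$ scalars $|u^{(1)}-v^{(1)}|,\ldots,|u^{(m)}-v^{(m)}|,\|u^{(m+1:m+p)}-v^{(m+1:m+p)}\|$, whereas you first collapse the $m$ inequality terms to $\sqrt{m}\,\|u^{(1:m)}-v^{(1:m)}\|$ and then combine with the equality block via a two-term Cauchy--Schwarz plus $m\alpha^2+\beta^2\le(m+1)\max\{\alpha,\beta\}^2$; both routes yield the same constant $\sqrt{m+1}$.
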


\begin{proof}
See Appendix~\ref{ssec:prooflemxx<=cuu}.
\end{proof}

Lemma~\ref{lem:xx<=cuu} allows one to relate the primal error $\|\bar{x}(u)-x^\star\|$ to the dual error $\|u-u^\star\|$ for any $u\in D$ and any $u^\star\in D^\star$. In addition, the next theorem bounds $\|\bar{x}(u)-x^\star\|$ by virtue of the error $d^\star-d(u)$ in dual optimality.

\begin{theorem}\label{thm:xx<=cuu}
Consider problem~\eqref{eq:generalprimalprob} under Assumption~\ref{asm:generalprimalprob}. For any $u\in D$ and any $u^\star\in D^\star$, 
\begin{align}
\|\bar{x}(u)-x^\star\|&\le\gamma(u^\star)\|u-u^\star\|,\label{eq:xx<=gammauu}\displaybreak[0]\\
\|\bar{x}(u)-x^\star\|&\le\sqrt{\frac{2(d^\star-d(u))}{\theta}},\quad \label{eq:xx<=sqrtddtheta}
\end{align}
where $\gamma(u^\star)\in (0,\infty)$ is defined in \eqref{eq:gammau}. 
\end{theorem}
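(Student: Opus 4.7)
The plan is to handle the two inequalities separately, each via a short argument. For \eqref{eq:xx<=gammauu}, I would simply invoke Lemma~\ref{lem:xx<=cuu} with $v=u^\star$. The key observation is that, as noted right after \eqref{eq:generaldualprob}, $\bar{x}(u^\star)=x^\star$ for any $u^\star\in D^\star$, so Lemma~\ref{lem:xx<=cuu} immediately yields
\begin{align*}
\|\bar{x}(u)-x^\star\|=\|\bar{x}(u)-\bar{x}(u^\star)\|\le\min\{\gamma(u),\gamma(u^\star)\}\,\|u-u^\star\|\le\gamma(u^\star)\,\|u-u^\star\|.
\end{align*}

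For \eqref{eq:xx<=sqrtddtheta}, the approach is to exploit strong convexity of the Lagrangian in its first argument. For any fixed $u\in D$, the map $x\mapsto \mathcal{L}(x,u)$ is the sum of the strongly convex $f$ (parameter $\theta$) and the convex terms $\sum_{i=1}^m u^{(i)}g^{(i)}(x)$ and $(u^{(m+1:m+p)})^T(Ax+b)$; convexity of the $g^{(i)}$ over $X$ combined with $u^{(1:m)}\in\mathbb{R}^m_+$ makes these terms convex in $x$. Hence $\mathcal{L}(\cdot,u)$ is strongly convex over $X$ with parameter at least $\theta$. Since $\bar{x}(u)$ minimizes $\mathcal{L}(\cdot,u)$ over $X$, the standard first-order optimality condition plus strong convexity gives the one-sided bound
\begin{align*}
\mathcal{L}(y,u)-\mathcal{L}(\bar{x}(u),u)\ge\frac{\theta}{2}\|y-\bar{x}(u)\|^2,\quad\forall y\in X.
\end{align*}
Applied at $y=x^\star\in X$, this yields $\mathcal{L}(x^\star,u)-d(u)\ge\frac{\theta}{2}\|x^\star-\bar{x}(u)\|^2$.

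To finish, I would bound $\mathcal{L}(x^\star,u)$ from above by $d^\star$. Primal feasibility of $x^\star$ gives $Ax^\star+b=0$ and $g^{(i)}(x^\star)\le 0$; combined with $u^{(i)}\ge 0$ for $i\le m$, every nontrivial term in the Lagrangian beyond $f(x^\star)$ is nonpositive, so $\mathcal{L}(x^\star,u)\le f(x^\star)=f^\star=d^\star$ (using the zero duality gap guaranteed by Assumption~\ref{asm:generalprimalprob}). Chaining the two inequalities gives $\frac{\theta}{2}\|\bar{x}(u)-x^\star\|^2\le d^\star-d(u)$, which rearranges to \eqref{eq:xx<=sqrtddtheta}.

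I do not foresee a serious obstacle in this proof; the only points that need care are confirming that $\mathcal{L}(\cdot,u)$ inherits strong convexity with parameter $\theta$ (which requires $u\in D$ so that the dual multipliers of the inequality constraints are nonnegative) and being explicit about the fact that strong convexity at a constrained minimizer, via the variational inequality $\tilde{\nabla}_x\mathcal{L}(\bar{x}(u),u)^T(y-\bar{x}(u))\ge 0$ for $y\in X$, produces the clean quadratic lower bound used above.
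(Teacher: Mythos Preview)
Your proposal is correct and follows essentially the same argument as the paper's proof: invoke Lemma~\ref{lem:xx<=cuu} with $v=u^\star$ for \eqref{eq:xx<=gammauu}, and for \eqref{eq:xx<=sqrtddtheta} combine strong convexity of $\mathcal{L}(\cdot,u)$ at the constrained minimizer $\bar{x}(u)$ (via the variational inequality) with the bound $\mathcal{L}(x^\star,u)\le d^\star$. The only cosmetic difference is that the paper obtains $\mathcal{L}(x^\star,u)\le d^\star$ from the saddle-point inequality $\mathcal{L}(x^\star,u)\le\mathcal{L}(x^\star,u^\star)=d^\star$, whereas you use primal feasibility of $x^\star$ together with zero duality gap; both justifications are equivalent here.
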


\begin{proof}
See Appendix~\ref{ssec:proofthmxx<=cuu}.
\end{proof}

Note that both Lemma~\ref{lem:xx<=cuu} and Theorem~\ref{thm:xx<=cuu} do not require the Lipschitz condition in Assumption~\ref{asm:generalprimalprob}(b), which, however, is needed for deriving other results below.

Having derived bounds on $\|\bar{x}(u)-x^\star\|$, we turn our attention to the primal error $|f(\bar{x}(u))-f^\star|$. To this end, for any compact subset $S\subset D$, define
\begin{align}
L(S)&=\sup_{u\in S}\gamma(u)\Bigl(\sigma_{\max}^2(A)+\sum_{i=1}^mL_i^2\Bigr)^{1/2}>0.\label{eq:LD'}
\end{align}
From Lemma~\ref{lem:xbounded} and \cite[Prop. 4.2.3]{Bertsekas03}, the boundedness of $S$ implies that the set 
$\cup_{u\in S}G(u)$ is bounded, so $L(S)<\infty$. Next, we show that $L(S)$ is a Lipschitz constant of $\nabla d$ on the compact set $S\subset D$:

\begin{proposition}\label{pro:gradientdualLipschitz}
Consider problem~\eqref{eq:generalprimalprob} under Assumption~\ref{asm:generalprimalprob}. Then, on every compact set $S\subset D$, $\nabla d$ satisfies a Lipschitz condition:
\begin{align}
\|\nabla d(u)-\nabla d(v)\|\le L(S)\|u-v\|,\quad\forall u, v\in S,\label{eq:gdgd<=Luu}
\end{align}
where $L(S)\in(0,\infty)$ is defined in \eqref{eq:LD'}.
Moreover, if $S$ is convex,
\begin{align}
d(v)-d(u)-\nabla d(u)^T(v-u)\ge&-\frac{L(S)}{2}\|u-v\|^2,\quad\forall u, v\in S. \label{eq:ddgduu>=-L2uu}
\end{align}
\end{proposition}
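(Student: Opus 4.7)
The plan is to derive the Lipschitz bound directly from the explicit formula \eqref{eq:generalgradd} for $\nabla d$ and then invoke Lemma~\ref{lem:xx<=cuu} to translate primal closeness into dual closeness.

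First I would write, for any $u,v\in S$,
\begin{align*}
\|\nabla d(u)-\nabla d(v)\|^2 = \sum_{i=1}^{m}\bigl(g^{(i)}(\bar{x}(u))-g^{(i)}(\bar{x}(v))\bigr)^2 + \|A(\bar{x}(u)-\bar{x}(v))\|^2,
\end{align*}
using the component formula \eqref{eq:generalgradd} (the $b$ terms cancel). The Lipschitz hypothesis in Assumption~\ref{asm:generalprimalprob}(b) bounds each scalar term by $L_i^2\|\bar{x}(u)-\bar{x}(v)\|^2$, and the spectral-norm bound gives $\|A(\bar{x}(u)-\bar{x}(v))\|^2\le\sigma_{\max}^2(A)\|\bar{x}(u)-\bar{x}(v)\|^2$. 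Hence
\begin{align*}
\|\nabla d(u)-\nabla d(v)\|\le\Bigl(\sigma_{\max}^2(A)+\sum_{i=1}^{m}L_i^2\Bigr)^{1/2}\|\bar{x}(u)-\bar{x}(v)\|.
\end{align*}

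Next I would apply Lemma~\ref{lem:xx<=cuu} to bound $\|\bar{x}(u)-\bar{x}(v)\|\le\min\{\gamma(u),\gamma(v)\}\|u-v\|\le\bigl(\sup_{w\in S}\gamma(w)\bigr)\|u-v\|$. Since $S$ is compact and (by Lemma~\ref{lem:xbounded} together with \cite[Prop. 4.2.3]{Bertsekas03}, as noted just before the proposition) $\sup_{w\in S}\gamma(w)<\infty$, combining with the previous display produces precisely \eqref{eq:gdgd<=Luu} with the constant $L(S)$ defined in \eqref{eq:LD'}.

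For \eqref{eq:ddgduu>=-L2uu}, I would use the standard descent-lemma argument: since $S$ is convex, the segment $u+t(v-u)$, $t\in[0,1]$, lies in $S$, so $\nabla d$ is $L(S)$-Lipschitz along this segment. Writing
\begin{align*}
d(v)-d(u)-\nabla d(u)^T(v-u)=\int_0^1\bigl[\nabla d(u+t(v-u))-\nabla d(u)\bigr]^T(v-u)\,dt,
\end{align*}
applying Cauchy--Schwarz and the just-proved Lipschitz bound in the integrand gives the two-sided estimate $\bigl|d(v)-d(u)-\nabla d(u)^T(v-u)\bigr|\le\frac{L(S)}{2}\|u-v\|^2$, which yields the claimed one-sided inequality.

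There is no real obstacle: both parts are essentially bookkeeping once Lemma~\ref{lem:xx<=cuu} is in hand. The only point that deserves care is justifying $\sup_{w\in S}\gamma(w)<\infty$, which follows from compactness of $S$, boundedness of $\{\bar{x}(u):u\in S\}$ (Lemma~\ref{lem:xbounded}), and the fact that the subdifferential of a convex function is locally bounded on the relative interior of its domain, exactly as invoked in the paragraph defining $L(S)$.
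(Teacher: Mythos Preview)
Your proposal is correct and follows essentially the same argument as the paper: expand $\|\nabla d(u)-\nabla d(v)\|^2$ via \eqref{eq:generalgradd}, apply Assumption~\ref{asm:generalprimalprob}(b) and the spectral-norm bound, invoke Lemma~\ref{lem:xx<=cuu}, and then deduce \eqref{eq:ddgduu>=-L2uu} by the standard descent-lemma integral argument (which the paper simply cites as \cite[Lemma~1.2.3]{Nesterov04}).
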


\begin{proof}
See Appendix~\ref{ssec:proofprogradientdualLipschitz}.
\end{proof}

The {\em local} Lipschitz continuity of $\nabla d$ on $D$ established in Proposition~\ref{pro:gradientdualLipschitz} allows to guarantee bounds on the primal error $|f(\bar{x}(u))-f^\star|$ and the primal infeasibility $\Delta(\bar{x}(u))$ of any $\bar{x}(u)$ with $u$ in some compact set $S\subset D$. The basic idea for deriving such bounds is to use \eqref{eq:f=d-ndu}, which gives
\begin{align}
f(\bar{x}(u))-f^\star=-\nabla d(u)^Tu+d(u)-d^\star\label{eq:ff<=-gdu}
\end{align}
and then bound $-\nabla d(u)^Tu$ using \eqref{eq:ddgduu>=-L2uu} with $v=u+\frac{1}{L(S)}\nabla d(u)$. However, since such a $v$ may not belong to $S$, we introduce the set
\begin{align}
\Phi(S)&=\operatorname{conv}\Bigl(\Bigl\{\mathcal{P}_D[u+\beta\nabla d(u)]:u\in S,\; \beta\in [0, 1/L(S)]\Bigr\}\Bigr),\label{eq:D''}
\end{align}
which is compact and convex. In addition, $S\subseteq\Phi(S)\subset D$. Hence, if $u\in S$ and we let $v=\mathcal{P}_D[u+\frac{1}{L(\Phi(S))}\nabla d(u)]$, then $u,v\in\Phi(S)$ and we can apply \eqref{eq:ddgduu>=-L2uu} over $\Phi(S)$. 
The following theorem provides the formal results:
\begin{theorem}\label{thm:primalfunctionvalue}
Consider problem~\eqref{eq:generalprimalprob} under Assumption~\ref{asm:generalprimalprob}. Let $S\subset D$ be compact. 
Then, for any $u\in S$ and any $u^\star\in D^\star$,
\begin{align}
f(\bar{x}(u))-f^\star&\le\Bigl(\|u\|_\infty\sqrt{2L(\Phi(S))(m+p)}+\sqrt{d^\star-d(u)}\Bigr)\sqrt{d^\star-d(u)},\label{eq:ff<=usqrt2Lmp2sqrtdddd}\displaybreak[0]\\
f(\bar{x}(u))-f^\star&\ge-\|u^\star\|\sqrt{2L(\Phi(S))(d^\star-d(u))},\label{eq:ff>=-usqrt2Ldd}\displaybreak[0]\\
\Delta(\bar{x}(u))&\le\sqrt{2L(\Phi(S))(d^\star-d(u))},\label{eq:Delta<=2Ldd}
\end{align}
where $\Phi(S)$ and $L(\Phi(S))\in(0,\infty)$ are defined in \eqref{eq:D''} and \eqref{eq:LD'}.
\end{theorem}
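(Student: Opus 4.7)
The plan is to base all three bounds on a single gradient-projection step about $u$ and then to extract each estimate by careful componentwise bookkeeping. Write $L := L(\Phi(S))$, set $\hat v := u + \frac{1}{L}\nabla d(u)$, and $v := \mathcal{P}_D[\hat v]$; by the very definition of $\Phi(S)$ both $u$ and $v$ lie in $\Phi(S)$. Applying Proposition~\ref{pro:gradientdualLipschitz} on the convex compact set $\Phi(S)$ together with the projection inequality $\nabla d(u)^T(v-u) \ge L\|v-u\|^2$ gives $d(v) - d(u) \ge \frac{1}{2}\nabla d(u)^T(v-u)$, and since $d(v) \le d^\star$ this yields the two workhorse estimates
\begin{align*}
\nabla d(u)^T(v-u) \le 2(d^\star - d(u)), \qquad \|v-u\|^2 \le \frac{2(d^\star-d(u))}{L}.
\end{align*}

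For \eqref{eq:Delta<=2Ldd}, I would expand $\nabla d(u)^T(v-u)$ component by component. For $i \le m$ the projection onto the nonnegative orthant gives $(v-u)^{(i)} = \max\{-u^{(i)},\, g^{(i)}(\bar{x}(u))/L\}$; a short sign check on $u^{(i)} + g^{(i)}(\bar{x}(u))/L$, combined with $u^{(i)} \ge 0$, shows the $i$th contribution is at least $(\max\{0,g^{(i)}(\bar{x}(u))\})^2/L$, and the equality block contributes $\|A\bar{x}(u)+b\|^2/L$. Hence $\nabla d(u)^T(v-u) \ge \Delta(\bar{x}(u))^2/L$, and the first workhorse estimate produces \eqref{eq:Delta<=2Ldd}. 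For \eqref{eq:ff>=-usqrt2Ldd}, I begin from $d^\star = d(u^\star) \le \mathcal{L}(\bar{x}(u),u^\star) = f(\bar{x}(u)) + (u^\star)^T\nabla d(u)$, valid because $\bar{x}(u) \in X$, so that $f(\bar{x}(u)) - f^\star \ge -(u^\star)^T\nabla d(u)$; using $(u^\star)^{(i)} \ge 0$ together with $g^{(i)}(\bar{x}(u)) \le \max\{0, g^{(i)}(\bar{x}(u))\}$ to dominate each inequality entry of $\nabla d(u)$ by its positive part, Cauchy--Schwarz yields $(u^\star)^T\nabla d(u) \le \|u^\star\|\,\Delta(\bar{x}(u))$, and substituting \eqref{eq:Delta<=2Ldd} finishes this bound.

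The upper bound \eqref{eq:ff<=usqrt2Lmp2sqrtdddd} is the subtlest. Strong duality and \eqref{eq:f=d-ndu} give $f(\bar{x}(u)) - f^\star = -(d^\star - d(u)) - \nabla d(u)^T u$, reducing the task to an upper bound on $-\nabla d(u)^T u$. The key move is the decomposition
\begin{align*}
\nabla d(u) = L(v-u) + L(\hat v - v),
\end{align*}
in which the variational characterization of the projection places $\hat v - v$ in the normal cone of $D$ at $v$. The first piece is controlled by Cauchy--Schwarz, the norm inequality $\|u\|_2 \le \sqrt{m+p}\,\|u\|_\infty$, and the second workhorse estimate, producing $-L(v-u)^T u \le \sqrt{m+p}\,\|u\|_\infty\sqrt{2L(d^\star-d(u))}$. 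For the second piece, the normal-cone structure forces $(\hat v - v)^{(i)} \le 0$ with $(\hat v - v)^{(i)} v^{(i)} = 0$ for $i \le m$ and $(\hat v - v)^{(j)} = 0$ on the equality block, and a componentwise computation shows $-L(\hat v - v)^T u$ equals precisely the subset of ``active'' terms in $\nabla d(u)^T(v-u)$, hence is at most $\nabla d(u)^T(v-u) \le 2(d^\star - d(u))$. Plugging both pieces into the identity for $f(\bar{x}(u)) - f^\star$ yields \eqref{eq:ff<=usqrt2Lmp2sqrtdddd} after one copy of $d^\star - d(u)$ cancels.

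The hard part will be the bookkeeping that underpins \eqref{eq:ff<=usqrt2Lmp2sqrtdddd}: one has to split the indices $i \le m$ into three sign regimes determined by $g^{(i)}(\bar{x}(u))$ and by the comparison of $u^{(i)}$ with $-g^{(i)}(\bar{x}(u))/L$, and then match the normal-cone contribution $-L(\hat v - v)^T u$ against exactly the regime where the projection is active, so that the estimate passes through while preserving the precise factor structure $\|u\|_\infty\sqrt{2L(m+p)(d^\star - d(u))}$ in the final bound.
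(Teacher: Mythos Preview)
Your approach is essentially the same as the paper's: both analyze the projected gradient step $v=\mathcal{P}_D[u+\tfrac{1}{L}\nabla d(u)]$ on $\Phi(S)$, apply the descent inequality from Proposition~\ref{pro:gradientdualLipschitz}, and split the coordinates according to where the projection is active. The paper packages the split via the index sets $\mathcal{A}(u)=\{i\le m: u^{(i)}+\tfrac{1}{L}\nabla^{(i)}d(u)<0\}$ and $\mathcal{I}(u)$, deriving the single key inequality $d^\star-d(u)\ge\tfrac{1}{2}\sum_{\mathcal{A}(u)}-\nabla^{(i)}d(u)u^{(i)}+\tfrac{1}{2L}\sum_{\mathcal{I}(u)}(\nabla^{(i)}d(u))^2$ and reading both sums off of it; you instead split the vector $\nabla d(u)=L(v-u)+L(\hat v-v)$ and handle the two pieces separately. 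Either route yields the same constants.

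One imprecision to fix: you claim that $-L(\hat v-v)^Tu$ \emph{equals} the active portion of $\nabla d(u)^T(v-u)$. It does not. For $i\in\mathcal{A}(u)$ one has $(\hat v-v)^{(i)}=u^{(i)}+g^{(i)}(\bar x(u))/L$ and $(v-u)^{(i)}=-u^{(i)}$, so the $i$th contribution to $-L(\hat v-v)^Tu$ is $-L(u^{(i)})^2-g^{(i)}(\bar x(u))u^{(i)}$, whereas the $i$th active term of $\nabla d(u)^T(v-u)$ is $-g^{(i)}(\bar x(u))u^{(i)}$. Since $-L(u^{(i)})^2\le 0$, you still get the inequality $-L(\hat v-v)^Tu\le\sum_{\mathcal{A}(u)}-g^{(i)}(\bar x(u))u^{(i)}\le\nabla d(u)^T(v-u)\le 2(d^\star-d(u))$, so the argument goes through; just replace ``equals precisely'' by ``is at most''.
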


\begin{proof}
See Appendix~\ref{ssec:proofthmprimalfunctionvalue}.
\end{proof}

The bounds provided in \eqref{eq:ff<=usqrt2Lmp2sqrtdddd}, \eqref{eq:ff>=-usqrt2Ldd}, and \eqref{eq:Delta<=2Ldd} depend on the compact set $\Phi(S)$ defined in \eqref{eq:D''}.
Thus, unlike Theorem~\ref{thm:xx<=cuu}, the results in Theorem~\ref{thm:primalfunctionvalue} only hold {\em locally}, which stems from the fact that $\nabla d$ is locally Lipschitz continuous on $D$. Nevertheless, under the assumption below, similar conclusions can be established {\em globally} over $D$:

\begin{assumption}\label{asm:boundedsubgrad}
The set $\cup_{u\in D}G(u)$ is bounded.
\end{assumption}

Assumption~\ref{asm:boundedsubgrad} can be satisfied when each constraint function $g^{(i)}$, $i=1,2,\ldots,m$ is affine or the constraint set $X$ is compact (cf. \cite[Prop. 4.2.3]{Bertsekas03}). For another example, if each $g^{(i)}$ is differentiable at every point of $X$ and satisfies the Lipschitz condition in Assumption~\ref{asm:generalprimalprob}(b) on an open set containing $X$, then $\|\nabla g^{(i)}(x)\|\le L_i$ $\forall x\in X$, which implies that Assumption~\ref{asm:boundedsubgrad} holds. However, if the Lipschitz condition only holds on $X$ as in Assumption~\ref{asm:generalprimalprob}(b), then $\|\nabla g^{(i)}(x)\|$ may be unbounded on $X$\footnote{For instance, let $X=\{x\in\mathbb{R}^2:x^{(1)}\ge1,\;x^{(2)}=1\}$. Also, let $g^{(i)}:\mathbb{R}_+^2\rightarrow\mathbb{R}$ be defined as $g^{(i)}(x)=-\bigl(x^{(1)}\bigr)^{x^{(2)}}$, which is differentiable, is convex, and satisfies a Lipschitz condition on $X$. However, $\|\nabla g^{(i)}(x)\|^2=1+\bigl(x^{(1)}\ln x^{(1)}\bigr)^2$ $\forall x\in X$, which is unbounded.} and Assumption~\ref{asm:boundedsubgrad} is thus not guaranteed.

\begin{remark}
Note that even when both Assumption~\ref{asm:generalprimalprob} and Assumption~\ref{asm:boundedsubgrad} are imposed, our results generalize those in references~\cite{Giselsson13,Patrinos13,Beck14,Necoara14}, since we do not require $f$ and $g^{(i)}$, $i=1,2,\ldots,m$ to be differentiable. Also note that Assumption~\ref{asm:boundedsubgrad} is not universally imposed throughtout the paper; most results hold without Assumption~\ref{asm:boundedsubgrad}.
\end{remark}

Under Assumption~\ref{asm:boundedsubgrad}, we have $\sup_{u\in D}\gamma(u)<\infty$, which leads to the Lipschitz continuity of $\nabla d$ on $D$ and the following error bounds:

\begin{corollary}\label{cor:primalfunctionvalueboundedsubgrad}
Consider problem~\eqref{eq:generalprimalprob} under Assumptions~\ref{asm:generalprimalprob} and~\ref{asm:boundedsubgrad}. Then, $\nabla d$ satisfies a Lipschitz condition on $D$: $\|\nabla d(u)-\nabla d(v)\|\le \tilde{L}\|u-v\|$ $\forall u,v\in D$, where
\begin{align*}
\tilde{L}=\sup_{u\in D}\gamma(u)\Bigl(\sigma_{\max}^2(A)+\sum_{i=1}^mL_i^2\Bigr)^{1/2}\in(0,\infty).
\end{align*}
Moreover, for any $u\in D$, \eqref{eq:ff<=usqrt2Lmp2sqrtdddd}--\eqref{eq:Delta<=2Ldd} hold with $L(\Phi(S))$ replaced by $\tilde{L}$.
\end{corollary}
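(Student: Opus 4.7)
The plan is to piggyback on the results already proved for compact subsets of $D$ and extend them globally using Assumption~\ref{asm:boundedsubgrad}. The first step is to verify that $\tilde{L}\in(0,\infty)$: positivity follows since $\gamma(u)>0$ by the standing assumption (the trivial case $\gamma\equiv 0$ having been excluded) and since Assumption~\ref{asm:generalprimalprob}(b),(d) force $(\sigma_{\max}^2(A)+\sum_{i=1}^m L_i^2)^{1/2}>0$. For finiteness, the definition \eqref{eq:gammau} shows that $\gamma(u)$ is controlled by $\sigma_{\max}(A)$ and $\sup_{q\in G(u)}\|q\|$, and Assumption~\ref{asm:boundedsubgrad} gives $\sup_{u\in D}\sup_{q\in G(u)}\|q\|<\infty$, whence $\sup_{u\in D}\gamma(u)<\infty$ and therefore $\tilde{L}<\infty$.

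For the global Lipschitz bound on $\nabla d$, I would pick arbitrary $u,v\in D$ and consider their line segment $S_{uv}=\operatorname{conv}\{u,v\}$, which is a compact convex subset of the convex set $D$. Proposition~\ref{pro:gradientdualLipschitz} applied with $S=S_{uv}$ yields $\|\nabla d(u)-\nabla d(v)\|\le L(S_{uv})\|u-v\|$, and since $L(S_{uv})\le\tilde{L}$ by the very definitions \eqref{eq:LD'} of these constants, the global Lipschitz bound with constant $\tilde{L}$ follows. The same reasoning shows that the descent inequality \eqref{eq:ddgduu>=-L2uu} holds on $D$ with $L(S)$ replaced by $\tilde{L}$: its proof only requires convexity of the ambient set (and $D$ is convex), and the local constant on any line segment in $D$ is bounded by $\tilde{L}$.

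For the primal error bounds \eqref{eq:ff<=usqrt2Lmp2sqrtdddd}--\eqref{eq:Delta<=2Ldd}, the strategy is to replay the proof of Theorem~\ref{thm:primalfunctionvalue} with $\Phi(S)$ replaced by $D$ and $L(\Phi(S))$ replaced by $\tilde{L}$. Concretely, for any $u\in D$ one takes the test point $v=\mathcal{P}_D\bigl[u+\tfrac{1}{\tilde{L}}\nabla d(u)\bigr]\in D$ and applies the global descent inequality just established to $u$ and $v$ in order to bound $-\nabla d(u)^T u$. Combined with \eqref{eq:f=d-ndu}, which yields $f(\bar{x}(u))-f^\star=-\nabla d(u)^T u+d(u)-d^\star$, and with the elementary relations between $\|\nabla d(u)\|$ and primal feasibility/optimality, the algebraic manipulations of the original proof of Theorem~\ref{thm:primalfunctionvalue} go through verbatim.

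The main subtlety is precisely the passage from the compact-subset versions of Proposition~\ref{pro:gradientdualLipschitz} and \eqref{eq:ddgduu>=-L2uu} to their global counterparts on the noncompact set $D$. This is resolved cleanly by the observation that any pair of points of $D$ lies in a compact convex subset of $D$ (for instance, their line segment), whose local Lipschitz constant is at most $\tilde{L}$; this is precisely where Assumption~\ref{asm:boundedsubgrad} pays off, and it essentially reduces the corollary to an uncluttered restatement of the preceding theorem.
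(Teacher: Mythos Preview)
Your proposal is correct and follows the same underlying idea as the paper: Assumption~\ref{asm:boundedsubgrad} bounds $\gamma(u)$ uniformly on $D$, so the local Lipschitz constants are all dominated by $\tilde{L}$. The paper's proof is even more economical for the second part: rather than replaying the argument of Theorem~\ref{thm:primalfunctionvalue} on $D$, it simply notes that for any compact $S\subset D$ one has $L(\Phi(S))\le\tilde{L}$, so applying Theorem~\ref{thm:primalfunctionvalue} directly (say with $S=\{u\}$) and using monotonicity of the right-hand sides of \eqref{eq:ff<=usqrt2Lmp2sqrtdddd}--\eqref{eq:Delta<=2Ldd} in the Lipschitz constant yields the conclusion without reopening the proof.
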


\begin{proof}
See Appendix~\ref{ssec:proofcorprimalfunctionvalueboundedsubgrad}
\end{proof}

In the final part of this section, we study a special case of~\eqref{eq:generalprimalprob} where all the constraints are linear and derive sharper and more explicit primal error bounds. Specifically, we consider
\begin{align}
\begin{array}{ll}\underset{x\in \mathbb{R}^{n}}{\mbox{minimize}} & f(x)\\ \operatorname{subject\,to} & A'x+b'\le0,\\ &Ax+b=0,\\ & x\in X,\end{array}\label{eq:linearprimalprob}
\end{align}
where $A'\in\mathbb{R}^{m\times n}$, $b'\in\mathbb{R}^m$, and $\le$ represents element-wise inequality. For convenience, let $\tilde{A}=[(A')^T, A^T]^T\in\mathbb{R}^{(m+p)\times n}$ and $\tilde{b}=[(b')^T, b^T]^T\in\mathbb{R}^{m+p}$. Without loss of generality, we assume $\tilde{A}$ is not a zero matrix.

If $f$ is strongly convex over the whole $\mathbb{R}^n$, $X$ is a polyhedral set, and the constraint set of problem~\eqref{eq:linearprimalprob} is nonempty, then Assumption~\ref{asm:generalprimalprob}(c) can be removed \cite[Prop. 5.2.1]{Bertsekas99}. Also, Assumption~\ref{asm:boundedsubgrad} is automatically satisfied for this problem due to the linearity of the constraints. Besides, $\bar{x}(u)$ exists and is unique for any $u\in\mathbb{R}^{m+p}$ and $d$ is differentiable over $\mathbb{R}^{m+p}$.

Following the proof of Lemma~\ref{lem:xx<=cuu}, we show in the corollary below that the distance between approximate primal solutions is proportional to that between the corresponding dual variables: 

\begin{corollary}\label{cor:xx<=uulinear}
Consider the linearly constrained problem~\eqref{eq:linearprimalprob} under Assumption~\ref{asm:generalprimalprob}. Then, for any $u,v\in\mathbb{R}^{m+p}$,
\begin{align*}
\|\bar{x}(u)-\bar{x}(v)\|\le\frac{\sigma_{\max}(\tilde{A})}{\theta}\|u-v\|.
\end{align*}
\end{corollary}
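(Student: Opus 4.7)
The plan is to mimic the proof of Lemma~\ref{lem:xx<=cuu} but specialize to the linear setting so that the subgradients of the inequality constraint functions are literally rows of $A'$, allowing one to combine the inequality and equality terms into a single spectral bound involving $\tilde{A}$. For the linearly constrained problem, the Lagrangian simplifies to $\mathcal{L}(x,u)=f(x)+u^T(\tilde{A}x+\tilde{b})$, and since $f$ is strongly convex on $X$ with parameter $\theta$, so is $\mathcal{L}(\cdot,u)$ for every $u\in\mathbb{R}^{m+p}$. Hence $\bar{x}(u)$ exists and is unique on all of $\mathbb{R}^{m+p}$ (not just on $D$), and the corollary's claim extends to arbitrary $u,v\in\mathbb{R}^{m+p}$.

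First I would write the first-order optimality conditions for the constrained minimization $\bar{x}(u)=\arg\min_{x\in X}\mathcal{L}(x,u)$. Because $f$ is not assumed differentiable, this takes the subgradient form: there exists $\tilde{\nabla}f(\bar{x}(u))\in\partial f(\bar{x}(u))$ such that
\begin{equation*}
\bigl(\tilde{\nabla}f(\bar{x}(u))+\tilde{A}^Tu\bigr)^T(y-\bar{x}(u))\ge 0,\qquad\forall y\in X,
\end{equation*}
and analogously for $v$ with some $\tilde{\nabla}f(\bar{x}(v))\in\partial f(\bar{x}(v))$. Setting $y=\bar{x}(v)$ in the first inequality and $y=\bar{x}(u)$ in the second, then adding, eliminates the $X$-projection terms and yields
\begin{equation*}
\bigl(\tilde{\nabla}f(\bar{x}(u))-\tilde{\nabla}f(\bar{x}(v))\bigr)^T(\bar{x}(v)-\bar{x}(u))+(u-v)^T\tilde{A}(\bar{x}(v)-\bar{x}(u))\ge 0.
\end{equation*}

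Next I would invoke strong convexity of $f$ over $X$, which gives the monotonicity estimate $(\tilde{\nabla}f(\bar{x}(u))-\tilde{\nabla}f(\bar{x}(v)))^T(\bar{x}(u)-\bar{x}(v))\ge\theta\|\bar{x}(u)-\bar{x}(v)\|^2$. Substituting into the previous display yields $\theta\|\bar{x}(u)-\bar{x}(v)\|^2\le (u-v)^T\tilde{A}(\bar{x}(v)-\bar{x}(u))$. Then Cauchy--Schwarz together with the spectral bound $\|\tilde{A}^T(u-v)\|\le\sigma_{\max}(\tilde{A})\|u-v\|$ delivers the desired estimate after dividing through by $\|\bar{x}(u)-\bar{x}(v)\|$ (trivial if this quantity vanishes).

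The only genuine subtlety, and thus the main thing to be careful about, is the nondifferentiability of $f$: one has to verify that the subgradient selections $\tilde{\nabla}f(\bar{x}(u))$ and $\tilde{\nabla}f(\bar{x}(v))$ entering the optimality conditions can also be used in the strong-convexity inequality, which is fine because Assumption~\ref{asm:generalprimalprob}(a) imposes the strong-convexity estimate for \emph{every} choice of subgradient. All other steps are routine. Compared with directly invoking Lemma~\ref{lem:xx<=cuu} (which would produce a constant involving $\sqrt{m+1}\max\{\sigma_{\max}(A),\max_i\|(A')^{(i,:)}\|\}/\theta$), collapsing the $\tilde{A}$-block into a single matrix gives the sharper constant $\sigma_{\max}(\tilde{A})/\theta$ that the corollary advertises.
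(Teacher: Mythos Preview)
Your proposal is correct and follows essentially the same route as the paper: the paper also specializes the first-order optimality conditions from the proof of Lemma~\ref{lem:xx<=cuu} to the linear case (where $\partial_x\mathcal{L}(x,u)=\partial f(x)+\tilde{A}^Tu$), obtains $\bigl(\tilde{\nabla}f(\bar{x}(u))-\tilde{\nabla}f(\bar{x}(v))\bigr)^T(\bar{x}(u)-\bar{x}(v))\le(u-v)^T\tilde{A}(\bar{x}(v)-\bar{x}(u))$, bounds the right-hand side via $\sigma_{\max}(\tilde{A})\|\bar{x}(u)-\bar{x}(v)\|\|u-v\|$, and finishes with the strong-convexity monotonicity inequality. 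Your explicit remark that Assumption~\ref{asm:generalprimalprob}(a) holds for every subgradient selection is exactly what justifies the last step.
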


\begin{proof}
See Appendix~\ref{ssec:proofcorxx<=uulinear}.
\end{proof}

Since the inequality constraints are linear in~\eqref{eq:linearprimalprob}, the bound provided in Corollary~\ref{cor:xx<=uulinear} is independent of $u$ and $v$. Moreover, it is tighter than that in Lemma~\ref{lem:xx<=cuu}, \emph{i.e.}, $\frac{\sigma_{\max}(\tilde{A})}{\theta}\le\min\{\gamma(u),\gamma(v)\}$. This can be seen from the facts that $\sup_{q\in G(u)}\|q\|\ge\frac{1}{\sqrt{m}}\|A'\|_F\ge\frac{1}{\sqrt{m}}\sigma_{\max}(A')$ and that $\sigma_{\max}^2(\tilde{A})\le\sigma_{\max}^2(A')+\sigma_{\max}^2(A)$. When there are no inequality constraints, \emph{i.e.}, $m=0$, the two bounds are equal. 

Due to the linearity of the constraints, the gradient of the dual function is globally Lipschitz continuous over the whole space with the Lipschitz constant $\frac{\sigma_{\max}^2(\tilde{A})}{\theta}$ \cite{Beck14}. Based on this, we provide global error bounds on primal optimality and feasibility:

\begin{proposition}\label{pro:primalfunctionvaluelinear}
Consider the linearly constrained problem~\eqref{eq:linearprimalprob} under Assumption~\ref{asm:generalprimalprob}. Then, for any $u\in D$ and any $u^\star\in D^\star$,
\begin{align}
-\|u^\star\|\sigma_{\max}(\tilde{A})\sqrt{\frac{2(d^\star-d(u))}{\theta}}&\le f(\bar{x}(u))-f^\star\le\|u\|\sigma_{\max}(\tilde{A})\sqrt{\frac{2(d^\star-d(u))}{\theta}},\label{eq:ff<=u2lambdamaxAAthetadd}\displaybreak[0]\\
\Delta(\bar{x}(u))&\le\sigma_{\max}(\tilde{A})\sqrt{\frac{2(d^\star-d(u))}{\theta}}.\label{eq:Delta<=2sigmathetadd}
\end{align}
\end{proposition}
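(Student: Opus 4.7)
The plan is to exploit two ingredients specific to the linearly constrained setting: the global Lipschitz continuity of $\nabla d(u)=\tilde{A}\bar{x}(u)+\tilde{b}$ on $\mathbb{R}^{m+p}$ with constant $L=\sigma_{\max}^2(\tilde{A})/\theta$, as already noted in the paragraph preceding the statement, together with the primal error bound $\|\bar{x}(u)-x^\star\|\le\sqrt{2(d^\star-d(u))/\theta}$ from \eqref{eq:xx<=sqrtddtheta}. The Lipschitz property immediately yields the descent inequality
\[
d(v)\ge d(u)+\nabla d(u)^T(v-u)-\frac{L}{2}\|v-u\|^2\quad\forall\,u,v\in\mathbb{R}^{m+p},
\]
which will drive the feasibility estimate, while \eqref{eq:xx<=sqrtddtheta} drives both optimality bounds.

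For the feasibility bound \eqref{eq:Delta<=2sigmathetadd}, I would introduce the infeasibility vector $\eta\in\mathbb{R}^{m+p}$ defined by $\eta^{(i)}=\max\{0,g^{(i)}(\bar{x}(u))\}$ for $i\le m$ and $\eta^{(m+1:m+p)}=A\bar{x}(u)+b$, so that $\|\eta\|=\Delta(\bar{x}(u))$. Two short observations then close the argument: first, $v:=u+\frac{1}{L}\eta$ lies in $D$ because $u^{(1:m)}\ge 0$ and $\eta^{(1:m)}\ge 0$; second, the elementary identity $g^{(i)}(\bar{x}(u))\cdot\max\{0,g^{(i)}(\bar{x}(u))\}=(\max\{0,g^{(i)}(\bar{x}(u))\})^2$ together with the equality-constraint block yields $\nabla d(u)^T\eta=\|\eta\|^2$. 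Substituting this $v$ into the descent inequality and using $d(v)\le d^\star$ gives $\Delta(\bar{x}(u))^2=\|\eta\|^2\le 2L(d^\star-d(u))$, which is precisely \eqref{eq:Delta<=2sigmathetadd}.

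For the optimality estimates in \eqref{eq:ff<=u2lambdamaxAAthetadd}, linearity of the constraints makes $\mathcal{L}(x,u)=f(x)+u^T(\tilde{A}x+\tilde{b})$, so the $\tilde{b}$-terms cancel cleanly when Lagrangians at two primal points are subtracted. For the upper bound, I would start from $\mathcal{L}(\bar{x}(u),u)\le\mathcal{L}(x^\star,u)$ (minimality of $\bar{x}(u)$), rearrange to $f(\bar{x}(u))-f^\star\le u^T\tilde{A}(x^\star-\bar{x}(u))$, and finish via $\|\tilde{A}^Tu\|\le\sigma_{\max}(\tilde{A})\|u\|$ and \eqref{eq:xx<=sqrtddtheta}. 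For the lower bound, I would use the saddle-point inequality $\mathcal{L}(\bar{x}(u),u^\star)\ge\mathcal{L}(x^\star,u^\star)=f^\star$, rearrange to $f(\bar{x}(u))-f^\star\ge-(u^\star)^T\tilde{A}(\bar{x}(u)-x^\star)-(u^\star)^T(\tilde{A}x^\star+\tilde{b})$, discard the last term by complementary slackness (which follows from strong duality since $d(u^\star)=\mathcal{L}(x^\star,u^\star)=f^\star$), and again combine Cauchy--Schwarz with \eqref{eq:xx<=sqrtddtheta}. The main obstacle is the feasibility step, where one must spot the clean identity $\nabla d(u)^T\eta=\|\eta\|^2$ and verify that $u+\frac{1}{L}\eta$ stays in $D$; the two optimality bounds are essentially one-liners once \eqref{eq:xx<=sqrtddtheta} is in hand, with linearity making the cross-terms collapse into a single Cauchy--Schwarz application.
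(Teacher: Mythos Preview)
Your proof is correct but follows a genuinely different route from the paper's. For the feasibility bound, the paper first observes that $\Delta(\bar{x}(u))\le\|\nabla d(u)-\nabla d(u^\star)\|$ (since $x^\star$ is feasible) and then invokes the cocoercivity inequality from \cite[Theorem~2.1.5]{Nesterov04} together with the optimality condition $\nabla d(u^\star)^T(u-u^\star)\le 0$ to obtain the intermediate estimate $\|\nabla d(u)-\nabla d(u^\star)\|^2\le\frac{2\sigma_{\max}^2(\tilde{A})}{\theta}(d^\star-d(u))$; your argument instead plugs the specific point $v=u+\tfrac{1}{L}\eta$ into the descent lemma and exploits the identity $\nabla d(u)^T\eta=\|\eta\|^2$, which is slicker and avoids the external reference. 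For the upper optimality bound, the paper works purely on the dual side via \eqref{eq:f=d-ndu}, rewriting $f(\bar{x}(u))-f^\star\le(\nabla d(u^\star)-\nabla d(u))^Tu$ using complementary slackness $\nabla d(u^\star)^Tu^\star=0$ and then reusing the same gradient-difference bound; you instead stay on the primal side, using the minimality $\mathcal{L}(\bar{x}(u),u)\le\mathcal{L}(x^\star,u)$ and \eqref{eq:xx<=sqrtddtheta} directly. The lower bounds are essentially the same in spirit. Your approach is more self-contained and arguably more elementary; the paper's approach has the small advantage of isolating the reusable bound on $\|\nabla d(u)-\nabla d(u^\star)\|$, which unifies the feasibility and upper-optimality estimates through a single inequality.
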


\begin{proof}
See Appendix~\ref{ssec:proofproprimalfunctionvaluelinear}.
\end{proof}

Again, it can be shown that the upper bound in \eqref{eq:ff<=u2lambdamaxAAthetadd} is not specialized from and is tighter than that in \eqref{eq:ff<=usqrt2Lmp2sqrtdddd}.

\section{Primal convergence in dual first-order methods}\label{sec:CPI}

In this section, we use the connections between primal and dual errors that are built in Section~\ref{sec:PEB} 
to analyze the convergence properties of the approximate primal solution when some common first-order methods are employed to solve the dual problem. 

\subsection{Primal convergence in the projected dual gradient method}\label{ssec:dualgrad}
 
We first consider the projected dual gradient method. Let the dual iterates $(u_k)_{k=0}^\infty\subset D$ be generated by
\begin{align}
u_{k+1}=\mathcal{P}_D[u_k+\alpha\nabla d(u_k)],\quad\forall k\ge0\label{eq:gradientdescentnonlinear}
\end{align}
from an arbitrary initial point $u_0\in D$. To derive the convergence rates of $(u_k)_{k=0}^\infty$ and $(\bar{x}(u_k))_{k=0}^\infty$, we impose the following assumption:

\begin{assumption}\label{asm:inequalityconstraints} Problem~\eqref{eq:generalprimalprob} satisfies the following:
\begin{itemize}
\item[(a)] The constraint functions $g^{(i)}$ $\forall i\in\{1,2,\ldots,m\}$ are differentiable at every point in $X$.
\item[(b)] There exists $\tilde{u}\in\mathbb{R}^{m+p}$ such that $\tilde{u}^{(1:m)}\in\mathbb{R}^m_-$ and $\mathcal{L}(\cdot,\tilde{u})$ is strongly convex over $X$.
\end{itemize}
\end{assumption}

To satisfy Assumption~\ref{asm:inequalityconstraints}(b), it suffices that each $\nabla g^{(i)}$ satisfies a Lipschitz condition on $X$ with Lipschitz constant $L'_i\ge0$. To see this, note from the proof of \cite[Lemma~1.2.3]{Nesterov04} that for each $i\in\{1,2,\ldots,m\}$, $g^{(i)}(x_1)-g^{(i)}(x_2)-\nabla g^{(i)}(x_2)^T(x_1-x_2)\le\frac{L_i'}{2}\|x_1-x_2\|^2$ $\forall x_1,x_2\in X$. Hence, by letting $\tilde{u}\in\mathbb{R}^{m+p}$ be such that $\tilde{u}^{(1:m)}\in\mathbb{R}^m_-$ and $-\sum_{i=1}^m\tilde{u}^{(i)}L_i'<\theta$, we have
\begin{align*}
\mathcal{L}(x_1,\tilde{u})-\mathcal{L}(x_2,\tilde{u})-\tilde{\nabla}_x\mathcal{L}(x_2,\tilde{u})^T(x_1-x_2)\ge\frac{\theta+\sum_{i=1}^m\tilde{u}^{(i)}L_i'}{2}\|x_1-x_2\|^2,
\end{align*}
for each $x_1,x_2\in X$ and each subgradient $\tilde{\nabla}_x\mathcal{L}(x_2,\tilde{u})\in\partial_x\mathcal{L}(x_2,\tilde{u})$. Thus, $\mathcal{L}(\cdot,\tilde{u})$ is strongly convex over $X$. Using $\tilde{u}$ in Assumption~\ref{asm:inequalityconstraints}, we define the set
\begin{align*}
\tilde{D}&=\{u\in\mathbb{R}^{m+p}:u^{(1:m)}-\tilde{u}^{(1:m)}\in\mathbb{R}^m_+\}\supset D.
\end{align*} 
For any $u\in\tilde{D}$, $\mathcal{L}(\cdot,u)$ is strongly convex over $X$ and thus $\bar{x}(u)$ uniquely exists.

The next lemma is an important step toward establishing the convergence rates of $(u_k)_{k=0}^\infty$ and $(\bar{x}(u_k))_{k=0}^\infty$: 

\begin{lemma}\label{lem:dualgradientLipschitzlargerset}
Consider problem~\eqref{eq:generalprimalprob} under Assumptions~\ref{asm:generalprimalprob} and~\ref{asm:inequalityconstraints}. Then, for any $u\in D$ and $v\in\tilde{D}$,
\begin{align}
&\|\nabla d(u)-\nabla d(v)\|\le\frac{\sqrt{m+1}}{\theta}\Bigl(\sigma^2_{\max}(A)+\sum_{i=1}^mL_i^2\Bigr)^{1/2}\nonumber\\
&\quad\cdot\max\Bigl\{\sigma_{\max}(A),\max_{i\in\{1,\ldots,m\}}\|\nabla g^{(i)}(\bar{x}(u))\|, \max_{i\in\{1,\ldots,m\}}\|\nabla g^{(i)}(\bar{x}(v))\|\Bigr\}\|u-v\|.\label{eq:gdgd<=Luulargerset}
\end{align}
\end{lemma}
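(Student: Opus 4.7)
The approach is to factor the bound through $\|\bar{x}(u)-\bar{x}(v)\|$ in two stages: first bound $\|\nabla d(u)-\nabla d(v)\|$ by a constant multiple of $\|\bar{x}(u)-\bar{x}(v)\|$, and then bound the latter by a constant multiple of $\|u-v\|$.

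For the first stage, the componentwise formula~\eqref{eq:generalgradd} gives $\|\nabla d(u)-\nabla d(v)\|^2=\sum_{i=1}^m(g^{(i)}(\bar{x}(u))-g^{(i)}(\bar{x}(v)))^2+\|A(\bar{x}(u)-\bar{x}(v))\|^2$. The Lipschitz condition from Assumption~\ref{asm:generalprimalprob}(b) and the operator-norm bound $\|Az\|\le\sigma_{\max}(A)\|z\|$ then yield $\|\nabla d(u)-\nabla d(v)\|\le(\sigma_{\max}^2(A)+\sum_{i=1}^m L_i^2)^{1/2}\|\bar{x}(u)-\bar{x}(v)\|$. So it suffices to establish $\|\bar{x}(u)-\bar{x}(v)\|\le\frac{\sqrt{m+1}}{\theta}\max\{\sigma_{\max}(A),\max_i\|\nabla g^{(i)}(\bar{x}(v))\|\}\|u-v\|$; inflating the max to also include $\max_i\|\nabla g^{(i)}(\bar{x}(u))\|$ then produces \eqref{eq:gdgd<=Luulargerset}.

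The second stage mirrors the proof of Lemma~\ref{lem:xx<=cuu} but must handle the asymmetric case $u\in D$, $v\in\tilde{D}$. Since $\mathcal{L}(\cdot,u)$ and $\mathcal{L}(\cdot,v)$ are both strongly convex over $X$, the minimizers $\bar{x}(u),\bar{x}(v)$ satisfy the variational inequalities $\xi_u^T(\bar{x}(v)-\bar{x}(u))\ge0$ and $\xi_v^T(\bar{x}(u)-\bar{x}(v))\ge0$, where $\xi_w=\tilde{\nabla} f(\bar{x}(w))+\sum_{i=1}^m w^{(i)}\nabla g^{(i)}(\bar{x}(w))+A^T w^{(m+1:m+p)}$ for an appropriate $\tilde{\nabla} f(\bar{x}(w))\in\partial f(\bar{x}(w))$; adding them gives $(\xi_u-\xi_v)^T(\bar{x}(u)-\bar{x}(v))\le0$. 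I would then expand $\xi_u-\xi_v$ and split each cross-term as $u^{(i)}\nabla g^{(i)}(\bar{x}(u))-v^{(i)}\nabla g^{(i)}(\bar{x}(v))=u^{(i)}(\nabla g^{(i)}(\bar{x}(u))-\nabla g^{(i)}(\bar{x}(v)))+(u^{(i)}-v^{(i)})\nabla g^{(i)}(\bar{x}(v))$. Since $u^{(i)}\ge0$ (because $u\in D$) and $\nabla g^{(i)}$ is monotone on $X$, the first piece contributes a nonnegative inner product against $\bar{x}(u)-\bar{x}(v)$; combined with the $\theta$-strong convexity of $f$, one obtains $\theta\|\bar{x}(u)-\bar{x}(v)\|^2\le[\sum_{i=1}^m(v^{(i)}-u^{(i)})\nabla g^{(i)}(\bar{x}(v))+A^T(v-u)^{(m+1:m+p)}]^T(\bar{x}(u)-\bar{x}(v))$. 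One Cauchy--Schwarz on the inner product, one triangle inequality to split the bracket into a $g$-part and an $A$-part, the Frobenius bound $\|\sum_i w^{(i)}\nabla g^{(i)}(\bar{x}(v))\|\le\sqrt{m}\max_i\|\nabla g^{(i)}(\bar{x}(v))\|\cdot\|w\|$ together with $\|A^Tw'\|\le\sigma_{\max}(A)\|w'\|$, and a final Cauchy--Schwarz to recombine the block norms into $\|u-v\|$ deliver the desired estimate after invoking $\sqrt{mM^2+\sigma_{\max}^2(A)}\le\sqrt{m+1}\max\{M,\sigma_{\max}(A)\}$.

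The main obstacle is precisely this asymmetry: the decomposition above requires $u^{(i)}\ge0$, while its symmetric variant would require $v^{(i)}\ge0$, which can fail when $v\in\tilde{D}\setminus D$. This is why the sharp $\min\{\gamma(u),\gamma(v)\}$ factor of Lemma~\ref{lem:xx<=cuu} must be weakened to a $\max$ over the two endpoints. A secondary sanity check is that $\bar{x}(v)$ and the formula~\eqref{eq:generalgradd} extend to $v\in\tilde{D}$; this follows from strong convexity of $\mathcal{L}(\cdot,v)$ on $\tilde{D}$ together with Danskin's theorem, so the same variational-inequality machinery is available at the $v$-endpoint.
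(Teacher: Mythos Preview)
Your two-stage strategy is correct and, in fact, more streamlined than the paper's own argument. The first stage (bounding $\|\nabla d(u)-\nabla d(v)\|$ by $(\sigma_{\max}^2(A)+\sum_i L_i^2)^{1/2}\|\bar{x}(u)-\bar{x}(v)\|$ via~\eqref{eq:generalgradd} and Assumption~\ref{asm:generalprimalprob}(b)) coincides exactly with what the paper does. The difference lies in the second stage. The paper proves the key estimate~\eqref{eq:xx<=uulargerset} on $\|\bar{x}(u)-\bar{x}(v)\|$ by a case distinction: for $v\in D$ it quotes Lemma~\ref{lem:xx<=cuu}, while for $v\in\tilde{D}\setminus D$ it introduces the intermediate point $w=\mathcal{P}_D[v]$, bounds $\|\bar{x}(u)-\bar{x}(w)\|$ using gradients at $\bar{x}(u)$ and $\|\bar{x}(w)-\bar{x}(v)\|$ using gradients at $\bar{x}(v)$ (each time invoking the one-sided estimate~\eqref{eq:xx<=uutight}), and then applies the triangle inequality. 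This is why the paper's bound naturally carries the maximum over \emph{both} endpoints.

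You instead observe that the variational-inequality derivation behind~\eqref{eq:xx<=uutight} only ever uses $u^{(i)}\ge 0$, never $v^{(i)}\ge 0$, so it applies verbatim to the pair $(u,v)\in D\times\tilde{D}$ once the $g^{(i)}$ are differentiable (Assumption~\ref{asm:inequalityconstraints}(a)) and $\mathcal{L}(\cdot,v)$ is strongly convex on $X$ (Assumption~\ref{asm:inequalityconstraints}(b) via the definition of $\tilde{D}$). This yields the sharper intermediate bound $\|\bar{x}(u)-\bar{x}(v)\|\le\frac{\sqrt{m+1}}{\theta}\max\{\sigma_{\max}(A),\max_i\|\nabla g^{(i)}(\bar{x}(v))\|\}\|u-v\|$, involving gradients only at $\bar{x}(v)$; the subsequent inflation to the symmetric maximum is harmless. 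Your route therefore avoids the case split and the auxiliary point $w$ entirely, and as a by-product recovers a slightly tighter version of~\eqref{eq:xx<=uulargerset}. The paper's detour buys nothing additional here; it appears the authors simply chose to reuse Lemma~\ref{lem:xx<=cuu} in its symmetric form rather than revisit its one-sided core.
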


\begin{proof}
See Appendix~\ref{ssec:prooflemdualgradientLipschitzlargerset}.
\end{proof}

The Lipschitz-like property of $\nabla d$ on $\tilde{D}$ established in Lemma~\ref{lem:dualgradientLipschitzlargerset} will be used to derive further inequalities below. To present these, we need to introduce the following additional notation: For any convex and compact set $S\subset D$, let
\begin{align}
\Psi(S)\!=\!\begin{cases}\operatorname{conv}\Bigl(\Bigl\{u\!+\!\beta(\nabla d(u)\!-\!\nabla d(v)):u,v\in S,\; \beta \in [0, \frac{1}{\eta(S)}]\Bigr\}\Bigr),&\text{if $\eta(S)\!>\!0$},\\S&\text{otherwise},\end{cases}\label{eq:tD'}
\end{align}
where $\eta(S)\in[0,\infty)$ is defined by
\begin{align}
\eta(S)=\begin{cases}\sup\limits_{u,v\in S}\max\limits_{i\in\{1,\ldots,m\}}&\tfrac{-|\nabla^{(i)}d(u)-\nabla^{(i)}d(v)|}{\tilde{u}^{(i)}},\\
&\text{ if $\sup\limits_{u,v\in S}\max\limits_{i\in\{1,\ldots,m\}}\tfrac{-|\nabla^{(i)}d(u)-\nabla^{(i)}d(v)|}{\tilde{u}^{(i)}}>0$},\\ 
\frac{\sigma_{\max}^2(A)}{\theta},&\text{ otherwise}.\end{cases}\label{eq:etaD'}
\end{align}
This guarantees that $\Psi(S)$ is compact and $S\subseteq\Psi(S)\subset\tilde{D}$. The expression of $\eta(S)$ is admittedly complicated, but it allows us to include the pathological cases that $\nabla d(u)$ is constant over $S$ and that there are no inequality constraints (\emph{i.e.}, $m=0$). In particular, $\eta(S)=0$ means the absence of equality constraints (\emph{i.e.}, $p=0$) and the invariance of $\nabla d(u)$ on $S$. In this case, the above definition still guarantees that $u+\eta^{-1}(\nabla d(u)-\nabla d(v))\in\Psi(S)$ $\forall u,v\in S$ $\forall\eta>\eta(S)$.

Under Assumption~\ref{asm:inequalityconstraints}, the definitions of $G(u)$, $\gamma(u)$, and $L(S)$ in \eqref{eq:gammau}, \eqref{eq:Gu}, and \eqref{eq:LD'} can be extended to hold for any $u\in\tilde{D}$ and any compact set $S\subset\tilde{D}$. Also, Lemma~\ref{lem:xbounded} still holds when $D$ is replaced by $\tilde{D}$, which implies that $0<L(\Psi(S))<\infty$. Moreover, Lemma~\ref{lem:dualgradientLipschitzlargerset} implies that $\|\nabla d(u)-\nabla d(v)\|\le L(\Psi(S))\|u-v\|$ $\forall u\in S$ $\forall v\in\Psi(S)$. With these observations, consider the following lemma:

\begin{lemma}\label{lem:dualgradientLipschitzineq}
Consider problem~\eqref{eq:generalprimalprob} under Assumptions~\ref{asm:generalprimalprob} and~\ref{asm:inequalityconstraints}. Let $S\subset D$ be convex and compact. Also let $\eta(S)\in[0,\infty)$, $\Psi(S)\subset\tilde{D}$, and $L(\Psi(S))\in(0,\infty)$ be defined in \eqref{eq:etaD'}, \eqref{eq:tD'}, and \eqref{eq:LD'}, respectively. Then, for any $u\in S$ and $v\in \Psi(S)$,
\begin{align}
d(v)-d(u)-\nabla d(u)^T(v-u)&\ge-\frac{L(\Psi(S))}{2}\|u-v\|^2.\label{eq:ddgduu>=-M2uu}\displaybreak[0]\\
\intertext{Moreover, for any $u,v\in S$ and any $\eta>0$ such that $\eta\ge\eta(S)$,}
d(v)-d(u)-\nabla d(u)^T(v-u)&\le\Bigl(\frac{L(\Psi(S))}{2\eta^2}-\frac{1}{\eta}\Bigr)\|\nabla d(u)-\nabla d(v)\|^2,\displaybreak[0]\label{eq:ddgduu<=-M2eta1etagdgd}\\
(\nabla d(u)-\nabla d(v))^T(u-v)&\le2\Bigl(\frac{L(\Psi(S))}{2\eta^2}-\frac{1}{\eta}\Bigr)\|\nabla d(u)-\nabla d(v)\|^2.\label{eq:gdgduu<=-2M2eta1etagdgd}
\end{align}
\end{lemma}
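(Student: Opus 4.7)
The plan is to derive all three bounds from the single fact that $-d$ is convex with $\nabla d$ Lipschitz on $\Psi(S)$; more precisely, from the inequality $\|\nabla d(u)-\nabla d(v)\|\le L(\Psi(S))\|u-v\|$ valid whenever $u\in S$ and $v\in\Psi(S)$, which is already noted just before the lemma as a consequence of Lemma~\ref{lem:dualgradientLipschitzlargerset} and the compactness of $\Psi(S)\subset\tilde{D}$. Inequality~\eqref{eq:ddgduu>=-M2uu} is then the classical descent lemma for the convex function $-d$: since $u\in S\subseteq\Psi(S)$ and $v\in\Psi(S)$, the segment from $u$ to $v$ lies in $\Psi(S)$ by convexity, and integrating $\nabla d$ along that segment while using the Lipschitz bound at each intermediate point (paired with $u\in S$) yields $-d(v)+d(u)+\nabla d(u)^T(v-u)\le\frac{L(\Psi(S))}{2}\|u-v\|^2$, which is the claimed inequality.

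For~\eqref{eq:ddgduu<=-M2eta1etagdgd}, I would fix $u\in S$ and introduce the shifted function $\phi(x):=-d(x)+\nabla d(u)^Tx$, which is convex, has the same Lipschitz gradient as $-d$ on $\Psi(S)$, and attains its global minimum at $x=u$ because $\nabla\phi(u)=0$. For $v\in S$ and $\eta>0$ with $\eta\ge\eta(S)$, I would perform a single gradient step $z=v-\frac{1}{\eta}\nabla\phi(v)=v+\frac{1}{\eta}(\nabla d(v)-\nabla d(u))$. The main obstacle is verifying $z\in\Psi(S)$: this is exactly why $\Psi(S)$ and $\eta(S)$ are tailored as they are, since $z$ takes the form $w+\beta(\nabla d(w)-\nabla d(w'))$ with $w=v$, $w'=u$, and $\beta=1/\eta\in[0,1/\eta(S)]$, and so belongs to $\Psi(S)$ by construction. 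Once $z\in\Psi(S)$ is established, the descent lemma applied on the segment from $v$ to $z$ gives $\phi(z)\le\phi(v)+\bigl(\frac{L(\Psi(S))}{2\eta^2}-\frac{1}{\eta}\bigr)\|\nabla\phi(v)\|^2$; combining this with $\phi(u)\le\phi(z)$ and the identities $\phi(u)-\phi(v)=d(v)-d(u)-\nabla d(u)^T(v-u)$ and $\|\nabla\phi(v)\|=\|\nabla d(u)-\nabla d(v)\|$ delivers~\eqref{eq:ddgduu<=-M2eta1etagdgd}.

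Finally,~\eqref{eq:gdgduu<=-2M2eta1etagdgd} will follow by symmetrization: writing~\eqref{eq:ddgduu<=-M2eta1etagdgd} together with its counterpart obtained by swapping $u$ and $v$ (permissible since both remain in $S$, and the right-hand side is unchanged under the swap) and adding the two, the $d(u)$ and $d(v)$ terms cancel and the linear terms combine to $(\nabla d(u)-\nabla d(v))^T(u-v)$, yielding the stated bound. The edge case $\eta(S)=0$ should be disposed of separately: by the definition of $\eta(S)$ this forces $\sigma_{\max}(A)=0$ and $\nabla^{(i)}d$ constant on $S$ for every $i\in\{1,\ldots,m\}$, so $\nabla d$ is constant on $S$, $\Psi(S)=S$ contains all the required points trivially, and both sides of~\eqref{eq:ddgduu<=-M2eta1etagdgd}--\eqref{eq:gdgduu<=-2M2eta1etagdgd} vanish.
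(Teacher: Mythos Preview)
Your proof is correct and follows essentially the same route as the paper's: the paper also derives~\eqref{eq:ddgduu>=-M2uu} by integrating $\nabla d$ along the segment inside the convex set $\Psi(S)$, obtains~\eqref{eq:ddgduu<=-M2eta1etagdgd} by introducing the auxiliary function $\phi(w)=d(w)-\nabla d(u)^Tw$ (the negative of your convex version), stepping from $v$ to $v+\tfrac{1}{\eta}(\nabla d(v)-\nabla d(u))\in\Psi(S)$ and invoking~\eqref{eq:ddgduu>=-M2uu}, and then symmetrizes for~\eqref{eq:gdgduu<=-2M2eta1etagdgd}. One small imprecision in your edge case $\eta(S)=0$: the left-hand side of~\eqref{eq:ddgduu<=-M2eta1etagdgd} need not vanish, only be nonpositive by concavity of $d$---which is what the paper uses---and this still suffices since the right-hand side is zero.
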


\begin{proof}
See Appendix~\ref{ssec:prooflemdualgradientLipschitzineq}.
\end{proof}

\begin{remark}
Lemma~\ref{lem:dualgradientLipschitzineq} is critical in deriving the convergence rates of the projected dual gradient method~\eqref{eq:gradientdescentnonlinear}. Note that Theorem~2.1.5 in \cite{Nesterov04} gives similar inequalities as \eqref{eq:ddgduu>=-M2uu}, \eqref{eq:ddgduu<=-M2eta1etagdgd}, and \eqref{eq:gdgduu<=-2M2eta1etagdgd}. However, those inequalities require that $\nabla d$ is Lipschitz continuous over $\mathbb{R}^{m+p}$ and their proofs do not apply to our case where $\nabla d$ is locally Lipschitz continuous on $D$. Indeed, as is suggesed by Example~\ref{ex:linear} below, when problem~\eqref{eq:generalprimalprob} reduces to the linearly constrained problem~\eqref{eq:linearprimalprob}, Lemma~\ref{lem:dualgradientLipschitzineq} is specialized to Theorem~2.1.5 in \cite{Nesterov04}.
\end{remark}

Having established the inequalities in Lemmas~\ref{lem:dualgradientLipschitzlargerset} and~\ref{lem:dualgradientLipschitzineq}, we provide the dual and primal convergence rates for the projected dual gradient method \eqref{eq:gradientdescentnonlinear}:

\begin{theorem}\label{thm:dualprimalconvrate}
Consider problem~\eqref{eq:generalprimalprob} under Assumptions~\ref{asm:generalprimalprob} and~\ref{asm:inequalityconstraints}. Let $(u_k)_{k=0}^\infty\subset D$ be a sequence generated by the projected dual gradient method \eqref{eq:gradientdescentnonlinear}. Also, let $u^\star\in D^\star$ and $D_0=\{u\in D:\|u-u^\star\|\le\|u_0-u^\star\|\}\subset D$. Moreover, let $\Phi(D_0)\subset D$ be defined in \eqref{eq:D''}, $L(\Phi(D_0))\in(0,\infty)$ in \eqref{eq:LD'}, $\eta(D_0)\in[0,\infty)$ in \eqref{eq:etaD'}, $\Psi(D_0)\subset\tilde{D}$ in \eqref{eq:tD'}, and $L(\Psi(D_0))\in(0,\infty)$ in \eqref{eq:LD'}. If
\begin{align}
0<\alpha<\begin{cases}\frac{2}{L(\Psi(D_0))},&\text{ if }L(\Psi(D_0))>\eta(D_0),\\ 4\Bigl(\frac{1}{\eta(D_0)}-\frac{L(\Psi(D_0))}{2\eta(D_0)^2}\Bigr),&\text{ otherwise},\end{cases}\label{eq:stepsize}
\end{align}
then for any $k\ge0$,
\begin{align}
&d^\star-d(u_k)\le\frac{R_0}{1+ k R_0\delta\rho^{-1}},\label{eq:gpmdualconvrate}\displaybreak[0]\\
&\|\bar{x}(u_k)-x^\star\|\le\left(\frac{2R_0\theta^{-1}}{1+kR_0\delta\rho^{-1}}\right)^{1/2},\label{eq:gpmprimalconvdist}\displaybreak[0]\\
&f(\bar{x}(u_k))-f^\star\le(\|u^\star\|+\|u_0-u^\star\|)\left(\frac{2(m+p)L(\Phi(D_0))R_0}{1+kR_0\delta\rho^{-1}}\right)^{1/2}\nonumber\displaybreak[0]\\
&\qquad\qquad\qquad\quad\;+\frac{R_0}{1+k R_0\delta\rho^{-1}},\label{eq:gpmprimalconvratefunc}\displaybreak[0]\\
&f(\bar{x}(u_k))-f^\star\ge-\|u^\star\|\left(\frac{2L(\Phi(D_0))R_0}{1+ k R_0\delta\rho^{-1}}\right)^{1/2},\label{eq:gpmprimalconvratefunclb}\displaybreak[0]\\
&\Delta(\bar{x}(u))\le\left(\frac{2L(\Phi(D_0))R_0}{1+ k R_0\delta\rho^{-1}}\right)^{1/2},\label{eq:gpminfconvrate}
\end{align}
where $R_0=d^{\star}-d(u_0)\in(0,\infty)$, $\rho=(\sup_{u\in D_0}\|\nabla d(u)\|+\|u_0-u^\star\|/\alpha)^2\in(0,\infty)$, and $\delta=1/\alpha-L(\Psi(D_0))/2\in(0,\infty)$.
\end{theorem}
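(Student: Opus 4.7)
The plan is to combine a ``stay-in-sublevel-set'' argument with a Polyak-style telescoping of the dual gaps, and then to push the resulting dual rate through Theorems~\ref{thm:xx<=cuu} and~\ref{thm:primalfunctionvalue}. Concretely, I would (i) show by induction that $u_k\in D_0$ for every $k\ge 0$; (ii) derive a per-iteration dual ascent inequality of the form $d(u_{k+1})-d(u_k)\ge\delta\|u_{k+1}-u_k\|^2$; (iii) use the projection variational inequality together with concavity to bound $\epsilon_k:=d^\star-d(u_k)$ by $\sqrt{\rho}\,\|u_{k+1}-u_k\|$; (iv) telescope the resulting quadratic recursion to obtain \eqref{eq:gpmdualconvrate}; and (v) invoke Theorems~\ref{thm:xx<=cuu} and~\ref{thm:primalfunctionvalue} on the compact set $D_0$ to deduce \eqref{eq:gpmprimalconvdist}--\eqref{eq:gpminfconvrate}.

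For (i), note that $u^\star$ is a fixed point of $w\mapsto\mathcal{P}_D[w+\alpha\nabla d(w)]$, so non-expansiveness of $\mathcal{P}_D$ gives
\[
\|u_{k+1}-u^\star\|^2\le\|u_k-u^\star\|^2+2\alpha(\nabla d(u_k)-\nabla d(u^\star))^T(u_k-u^\star)+\alpha^2\|\nabla d(u_k)-\nabla d(u^\star)\|^2.
\]
Since $D_0$ is convex and compact, inequality \eqref{eq:gdgduu<=-2M2eta1etagdgd} of Lemma~\ref{lem:dualgradientLipschitzineq} applies to the pair $(u_k,u^\star)$; choosing $\eta=L(\Psi(D_0))$ when $L(\Psi(D_0))>\eta(D_0)$ and $\eta=\eta(D_0)$ otherwise, the cross term becomes $-c\|\nabla d(u_k)-\nabla d(u^\star)\|^2$ for some $c>0$, and the step-size rule \eqref{eq:stepsize} is precisely what makes the coefficient $-2\alpha c+\alpha^2$ nonpositive. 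Hence $\|u_{k+1}-u^\star\|\le\|u_k-u^\star\|\le\|u_0-u^\star\|$, proving $u_{k+1}\in D_0$.

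For (ii)--(iv), once $u_k,u_{k+1}\in D_0\subseteq\Psi(D_0)$, inequality \eqref{eq:ddgduu>=-M2uu} combined with the standard projection identity $\nabla d(u_k)^T(u_{k+1}-u_k)\ge\tfrac{1}{\alpha}\|u_{k+1}-u_k\|^2$ yields $d(u_{k+1})-d(u_k)\ge\delta\|u_{k+1}-u_k\|^2$. Concavity of $d$ and the projection variational inequality tested at $u^\star$ then give
\[
\epsilon_k\le\nabla d(u_k)^T(u^\star-u_{k+1})+\nabla d(u_k)^T(u_{k+1}-u_k)\le\Bigl(\tfrac{\|u_{k+1}-u^\star\|}{\alpha}+\|\nabla d(u_k)\|\Bigr)\|u_{k+1}-u_k\|,
\]
and the prefactor is at most $\sqrt{\rho}$ by step (i) and compactness of $D_0$. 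Combining these yields $\epsilon_k-\epsilon_{k+1}\ge(\delta/\rho)\,\epsilon_k^2$. Since $(\epsilon_k)$ is nonincreasing (trivial if $\epsilon_k=0$ for some $k$; otherwise dividing by $\epsilon_k\epsilon_{k+1}$), one obtains $\epsilon_{k+1}^{-1}-\epsilon_k^{-1}\ge\delta/\rho$, and telescoping from $0$ to $k$ produces \eqref{eq:gpmdualconvrate}.

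Finally, \eqref{eq:gpmprimalconvdist} is immediate from \eqref{eq:xx<=sqrtddtheta}. Applying Theorem~\ref{thm:primalfunctionvalue} with $S=D_0$, and using the triangle-inequality bound $\|u_k\|\le\|u^\star\|+\|u_0-u^\star\|$ that follows from step (i), substituting the dual rate \eqref{eq:gpmdualconvrate} into \eqref{eq:ff<=usqrt2Lmp2sqrtdddd}--\eqref{eq:Delta<=2Ldd} yields \eqref{eq:gpmprimalconvratefunc}, \eqref{eq:gpmprimalconvratefunclb}, and \eqref{eq:gpminfconvrate}. The main obstacle throughout is step (i): because $\nabla d$ is only \emph{locally} Lipschitz on $D$, classical co-coercivity is unavailable globally, and one must simultaneously (a) ensure the descent estimate \eqref{eq:ddgduu>=-M2uu} applies over the enlarged set $\Psi(D_0)$, (b) control the projected trial points through $\Phi(D_0)$ when invoking Theorem~\ref{thm:primalfunctionvalue}, and (c) match these two local moduli against $\eta(D_0)$ so that the step-size window \eqref{eq:stepsize} is nonempty and both $\delta>0$ and contractivity toward $u^\star$ hold simultaneously—this is precisely what motivates the somewhat intricate definitions of $\Phi(\cdot)$, $\Psi(\cdot)$, and $\eta(\cdot)$.
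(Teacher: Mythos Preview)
Your proposal is correct and follows essentially the same route as the paper's proof: the induction that keeps $u_k\in D_0$ via \eqref{eq:gdgduu<=-2M2eta1etagdgd} and the step-size condition \eqref{eq:stepsize}, the ascent inequality from \eqref{eq:ddgduu>=-M2uu} plus the projection variational inequality, the Polyak-style recursion $\epsilon_k-\epsilon_{k+1}\ge(\delta/\rho)\epsilon_k^2$, and then the transfer to the primal via Theorems~\ref{thm:xx<=cuu} and~\ref{thm:primalfunctionvalue} with $S=D_0$. The only cosmetic difference is that the paper cites \cite[Theorem~5.1]{Levitin66} and \cite[Lemma~6, Sec.~2.2]{Polyak87} for your steps (ii)--(iv), whereas you spell out the projection-variational-inequality and telescoping arguments explicitly; the content is identical.
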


\begin{proof}
See Appendix~\ref{ssec:proofthmdualprimalconvrate}.
\end{proof}

Theorem~\ref{thm:dualprimalconvrate} says that under Assumptions~\ref{asm:generalprimalprob} and~\ref{asm:inequalityconstraints} as well as a proper step-size choice \eqref{eq:stepsize}, the dual function value at the dual iterates $(u_k)_{k=0}^\infty$ converges to $d^\star$ at a rate of $O(1/k)$.  
Note that this result extends earlier analysis of the projected gradient method for functions with globally Lipschitz continuous gradient (e.g., \cite{Levitin66}) to a class of functions with locally Lipschitz continuous gradient on closed and convex sets in the form of $D$. Moreover, this result implies that the primal iterates $(\bar{x}(u_k))_{k=0}^\infty$ converge at a rate no worse than $O(1/\sqrt{k})$ in primal optimality and feasibility. Furthermore, although \eqref{eq:stepsize} provides a sufficient condition for the range of step-sizes that guarantees these convergence rates, it does not explicitly tell how to select a proper step-size. In the examples below, we show explicit step-size rules satisfying \eqref{eq:stepsize} for some important problem classes.

\begin{example}\label{ex:Xcompact}
Suppose that $X$ is a compact set. Note that if $m\neq0$,
\begin{align}
\sup\limits_{u,v\in S}\max\limits_{i\in\{1,\ldots,m\}}&\tfrac{-|\nabla^{(i)}d(u)-\nabla^{(i)}d(v)|}{\tilde{u}^{(i)}}\le\sup\limits_{u,v\in D_0}\max\limits_{i\in\{1,\ldots,m\}}-\frac{|g^{(i)}(\bar{x}(u))-g^{(i)}(\bar{x}(v))|}{\tilde{u}^{(i)}}\nonumber\\
&\le\sup\limits_{u,v\in D_0}\max\limits_{i\in\{1,\ldots,m\}}-\frac{L_i}{\tilde{u}^{(i)}}\|\bar{x}(u)-\bar{x}(v)\|\label{eq:etaxbar}\\
&\le\max\limits_{i\in\{1,\ldots,m\}}-\frac{L_i}{\tilde{u}^{(i)}}\operatorname{diam}(X),\nonumber
\end{align}
where the first inequality is due to \eqref{eq:generalgradd} and the second comes from Assumption~\ref{asm:generalprimalprob}(b).
Hence, 
\begin{align*}
\eta(D_0)\le\hat{\eta}\triangleq\max\left\{\frac{\sigma_{\max}^2(A)}{\theta},\max\limits_{i\in\{1,\ldots,m\}}-\frac{L_i}{\tilde{u}^{(i)}}\operatorname{diam}(X)\right\}.
\end{align*}
Also note that
\begin{align*}
&\gamma(u)\le\hat{\gamma}\triangleq\frac{\sqrt{m+1}}{\theta}\max\{\sigma_{\max}(A),\sup_{x\in X}\max_{i\in\{1,2,\ldots,m\}}\|\nabla g^{(i)}(x)\|\},\quad\forall u\in\tilde{D},\\
\intertext{and thus}
&L(\Psi(D_0))\le\hat{L}\triangleq\hat{\gamma}\Bigl(\sigma_{\max}^2(A)+\sum_{i=1}^mL_i^2\Bigr)^{1/2}.
\end{align*}
Since $X$ is compact, we have $0<\hat{\eta}<\infty$ and $0<\hat{L}<\infty$. Then, as long as
\begin{align}
0<\alpha<\begin{cases}\frac{2}{\hat{L}},&\text{ if }\hat{L}>\hat{\eta},\\ 4(\frac{1}{\hat{\eta}}-\frac{\hat{L}}{2\hat{\eta}^2}),&\text{ otherwise},\end{cases}\label{eq:stepsizesmaller}
\end{align}
the step-size $\alpha$ satisfies \eqref{eq:stepsize}. Notice that unlike $\eta(D_0)$ and $L(\Psi(D_0))$, the constants $\hat{\eta}$ and $\hat{L}$ can be directly determined from the primal problem. 
\end{example}

\begin{example}\label{ex:gLipschitz}
Suppose, for each $i\in\{1,2,\ldots,m\}$, that $g^{(i)}$ is Lipschitz continuous on an open set containing $X$ with Lipschitz constant $L_i>0$. Then, $\|\nabla g^{(i)}(x)\|\le L_i$ $\forall x\in X$ $\forall i\in\{1,2,\ldots,m\}$, which implies that
\begin{align*}
\gamma(u)\le\hat{\gamma}\triangleq\frac{\sqrt{m+1}}{\theta}\max\{\sigma_{\max}(A),
\max_{i\in\{1,2,\ldots,m\}}L_i\},\quad\forall u\in D.
\end{align*}
Due to \eqref{eq:etaxbar} and Lemma~\ref{lem:xx<=cuu},
\begin{align*}
\sup\limits_{u,v\in S}\max\limits_{i\in\{1,\ldots,m\}}\tfrac{-|\nabla^{(i)}d(u)-\nabla^{(i)}d(v)|}{\tilde{u}^{(i)}}&\le\max\limits_{i\in\{1,\ldots,m\}}-\frac{L_i}{\tilde{u}^{(i)}}\cdot\!\!\sup_{u\in D_0}\gamma(u)\cdot\!\!\!\sup_{u,v\in D_0}\|u-v\|\\
&\le\max\limits_{i\in\{1,\ldots,m\}}-\frac{L_i}{\tilde{u}^{(i)}}\hat{\gamma}\operatorname{diam}(D_0).
\end{align*}
Therefore,
\begin{align*}
\eta(D_0)\le\hat{\eta}\triangleq\max\left\{\frac{\sigma_{\max}^2(A)}{\theta},\max\limits_{i\in\{1,\ldots,m\}}-\frac{L_i}{\tilde{u}^{(i)}}\hat{\gamma}\operatorname{diam}(D_0)\right\}.
\end{align*}
Also, $L(\Psi(D_0))\le\hat{L}\triangleq\hat{\gamma}\Bigl(\sigma_{\max}^2(A)+\sum_{i=1}^mL_i^2\Bigr)^{1/2}$. Then, any step-size $\alpha$ satisfying \eqref{eq:stepsizesmaller} meets \eqref{eq:stepsize}. Here, the constants $\hat{\eta}$ and $\hat{L}$ solely depend on the primal problem as well as an upper bound on the diameter of the set $D_0$.
\end{example}

\begin{example}\label{ex:linear}
When problem~\eqref{eq:generalprimalprob} reduces to the linearly constrained problem~\eqref{eq:linearprimalprob}, it becomes a special case of Example~\ref{ex:gLipschitz}. In this case, Assumption~\ref{asm:inequalityconstraints}(b) holds for every $\tilde{u}\in\mathbb{R}^{m+p}$ with $\tilde{u}^{(1:m)}\in\mathbb{R}_-^m$. By taking $\tilde{u}^{(1:m)}$ sufficiently small, we can make $\hat{\eta}$ in Example~\ref{ex:gLipschitz} equal to $\sigma_{\max}^2(A)/\theta$. Thus, $\eta(D_0)\le\sigma_{\max}^2(A)/\theta\le\sigma_{\max}^2(\tilde{A})/\theta$. This, along with the fact that $\nabla d$ is Lipschitz continuous with Lipschitz constant $\sigma_{\max}^2(\tilde{A})/\theta$, implies that $0<\alpha<\frac{2\theta}{\sigma_{\max}^2(\tilde{A})}$ satisfies \eqref{eq:stepsize}. This step-size condition coincides with the standard one used to guarantee the convergence of the gradient methods when the objective function has globally Lipschitz continuous gradient \cite{Nesterov04}. With such a step-size, \eqref{eq:gpmdualconvrate}--\eqref{eq:gpminfconvrate} hold with $L(\Phi(D_0))=L(\Psi(D_0))=\sigma_{\max}^2(\tilde{A})/\theta$. Also, from \eqref{eq:ff<=u2lambdamaxAAthetadd}, we have a tighter upper bound on the primal convergence rate
\begin{align}
f(\bar{x}(u_k))-f^\star\le\sigma_{\max}(\tilde{A})(\|u^\star\|+\|u_0-u^\star\|)\left(\frac{2R_0\theta^{-1}}{1+kR_0\delta\rho^{-1}}\right)^{1/2}.\label{eq:ff<=sigmauuu2Rtheta1kRdeltarho}
\end{align}
\end{example}

It is known that the projected gradient method is able to converge linearly when the objective function is strongly convex \cite{Nesterov04}. Theorem~\ref{thm:xx<=cuu} thus suggests that the primal iterates could achieve linear convergence if the dual function is strongly concave. Indeed, if the subgradients of $f$ satisfy a Lipschitz condition on $X$ with Lipschitz constant $M>0$, then the dual function for problem~\eqref{eq:linearprimalprob} with $\tilde{A}$ having full row rank and $X=\mathbb{R}^n$ is strongly concave with concavity parameter $-\theta\sigma_{\min}^2(\tilde{A})/M^2<0$ \cite{Urruty96}. Therefore, for any $\alpha\in(0,2M^2\theta/(\theta^2\sigma_{\min}^2(\tilde{A})+M^2\sigma_{\max}^2(\tilde{A})]$, we have
\begin{align*}
\|u_k-u^\star\|&\le q^k\|u_0-u^\star\|,\\
\|\bar{x}(u_k)-x^\star\|&\le q^k\frac{\sigma_{\max}(\tilde{A})}{\theta}\|u_0-u^\star\|,
\end{align*}
where $q=\Bigl(1-\frac{2\alpha\theta\sigma_{\min}^2(\tilde{A})\sigma_{\max}^2(\tilde{A})}{\theta^2\sigma_{\min}^2(\tilde{A})+M^2\sigma_{\max}^2(\tilde{A})}\Bigr)^{1/2}\in[0,1)$. Moreover, $q$ reaches its minimum\linebreak[4] $\Bigl(1-\frac{4M^2\theta^2\sigma_{\min}^2(\tilde{A})\sigma_{\max}^2(\tilde{A})}{(\theta^2\sigma_{\min}^2(\tilde{A})+M^2\sigma_{\max}^2(\tilde{A}))^2}\Bigr)^{1/2}$ when $\alpha=\frac{2M^2\theta}{\theta^2\sigma_{\min}^2(\tilde{A})+M^2\sigma_{\max}^2(\tilde{A})}$ \cite{Nesterov04}.

\subsection{Primal convergence in fast dual gradient methods}\label{ssec:dualfast}

In this subsection, we move on to fast dual gradient methods. To the best of the authors' knowledge, all the existing fast gradient methods require that the gradient of the objective function satisfies a Lipschitz condition on at least the feasible region in order to reach a convergence rate of $O(1/k^2)$. Hence, throughout this subsection, we let Assumptions~\ref{asm:generalprimalprob} and~\ref{asm:boundedsubgrad} hold, so that $\nabla d$ satisfies a Lipschitz condition on $D$ with Lipschitz constant $\tilde{L}$ defined in Corollary~\ref{cor:primalfunctionvalueboundedsubgrad}. We also assume that (an upper bound on) $\sup_{u\in D}\gamma(u)$ and thus $\tilde{L}$ are known.\footnote{In Examples~\ref{ex:Xcompact} and~\ref{ex:gLipschitz}, the upper bounds on $\sup_{u\in D}\gamma(u)$ and $\tilde{L}$ (\emph{i.e.}, $\hat{\gamma}$ and $\hat{L}$) can be easily computed from the primal problem.}

We consider the $1$-memory fast gradient method in \cite{Tseng08} for solving the dual problem~\eqref{eq:generaldualprob}. To start with, define the following: Let $h:\mathbb{R}^{m+p}\rightarrow\mathbb{R}$ be differentiable on an open set containing $D$ and let $Q(u,v)=h(u)-h(v)-\nabla h(v)^T(u-v)$ $\forall u\in\mathbb{R}^{m+p}$ $\forall v\in D$. Assume $h$ is strictly convex and satisfies $Q(u,v)\ge\|u-v\|^2/2$ $\forall u,v\in D$. Also, let $\ell_{-d}(u,v)=-d(v)-\nabla d(v)^T(u-v)$ $\forall u,v\in D$. For completeness, we provide the algorithm below:

\begin{algorithm}[Algorithm~1, \cite{Tseng08}]\label{alg:1memory}
\begin{algorithminit}{}
\item Let $\beta_0=1$ and choose $u_0,w_0\in D$.
\end{algorithminit}
\begin{algorithmoper}{At each time $k\ge0$:}
\item Choose a closed convex set $U_k\subseteq\mathbb{R}^{m+p}$ such that $U_k\cap D^\star\neq\emptyset$.
\item Let $v_k=(1-\beta_k)u_k+\beta_kw_k$,\\
\mbox{\quad\;\;\,}$w_{k+1}=\operatorname{arg\;min}_{u\in U_k\cap D}\ell_{-d}(u,v_k)+\beta_k\tilde{L}Q(u,w_k)$,\\
\mbox{\quad\;\;\,}$\hat{u}_{k+1}=(1-\beta_k)u_k+\beta_kw_{k+1}$.
\item Choose $u_{k+1}\in D$ be such that\\ 
$
\ell_{-d}(u_{k+1},v_k)+\frac{\tilde{L}}{2}\|u_{k+1}-v_k\|^2\le\ell_{-d}(\hat{u}_{k+1},v_k)+\frac{\tilde{L}}{2}\|\hat{u}_{k+1}-v_k\|^2.$
\item Choose $\beta_{k+1}\le2/(k+3)$.
\end{algorithmoper}
\end{algorithm}

In Algorithm~\ref{alg:1memory}, the variables $u_k$, $v_k$, and $w_k$ remain in $D$ at all times. One simple way to choose $U_k$ in Step~2 and $u_{k+1}$ in Step~4 is that $U_k\supseteq D$ and $u_{k+1}=\operatorname{arg\;min}_{u\in D}\ell_{-d}(u,v_k)+\tilde{L}Q(u,w_k)$. In this case, if $Q(u,v)=\|u-v\|^2/2$ , then the updates of $w_{k+1}$ and $u_{k+1}$ reduce to projected gradient steps $w_{k+1}=\mathcal{P}_D[w_k+\nabla d(v_k)/(\beta_k\tilde{L})]$ and $u_{k+1}=\mathcal{P}_D[v_k+\nabla d(v_k)/\tilde{L}]$. Other options for $U_k$ and $u_{k+1}$ can also be found in \cite{Tseng08}. Note that the above algorithm is indeed specialized from the more general Algorithm~1 in \cite{Tseng08}. This is for the purpose of deriving the primal and dual convergence rates in the following proposition:

\begin{proposition}\label{pro:fgdualprimalconvrate}
Consider problem~\eqref{eq:generalprimalprob} under Assumptions~\ref{asm:generalprimalprob} and~\ref{asm:boundedsubgrad}. Let\linebreak[4]$(u_k)_{k=0}^\infty\subset D$ be a sequence generated by Algorithm~\ref{alg:1memory} and let $u^\star\in D^\star$. Then, for any $k\ge1$,
\begin{align}
d^\star-d(u_k)&\le\frac{4\tilde{L}Q(u^\star,w_0)}{(k+1)^2},\label{eq:fgdualconvrate}\displaybreak[0]\\
\|\bar{x}(u_k)-x^\star\|&\le\frac{(8\tilde{L}Q(u^\star,w_0)\theta^{-1})^{1/2}}{k+1},\label{eq:fgprimalconvratedist}\displaybreak[0]\\
f(\bar{x}(u_k))-f^\star&\le\frac{\tilde{L}\|u_k\|_\infty\bigl(8(m+p)Q(u^\star,w_0)\bigr)^{1/2}}{k+1}+\frac{4\tilde{L}Q(u^\star,w_0)}{(k+1)^2},\label{eq:fgprimalconvratefunc}\displaybreak[0]\\
f(\bar{x}(u_k))-f^\star&\ge-\frac{\tilde{L}\|u^\star\|(8Q(u^\star,w_0))^{1/2}}{k+1},\label{eq:fgprimalconvratefunclb}\displaybreak[0]\\
\Delta(\bar{x}(u_k))&\le\frac{\tilde{L}(8Q(u^\star,w_0))^{1/2}}{k+1},\label{eq:fginfconvrate}
\end{align}
where $\tilde{L}$ is defined in Corollary~\ref{cor:primalfunctionvalueboundedsubgrad}.
\end{proposition}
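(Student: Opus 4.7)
The plan is to reduce everything to the dual convergence rate \eqref{eq:fgdualconvrate}, which is obtained by invoking the analysis of \cite{Tseng08} verbatim, and then to translate this rate into statements about the approximate primal iterate $\bar x(u_k)$ using the error bounds proved earlier in the paper.

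First I would handle \eqref{eq:fgdualconvrate}. By Corollary~\ref{cor:primalfunctionvalueboundedsubgrad}, Assumptions~\ref{asm:generalprimalprob} and~\ref{asm:boundedsubgrad} guarantee that $-d$ is convex and differentiable on an open set containing $D$, with $\nabla d$ globally $\tilde L$-Lipschitz on $D$. This is precisely the setting in which Algorithm~1 of \cite{Tseng08} is analyzed (applied to the minimization of $-d$ over $D$), with $\tilde L$ playing the role of the Lipschitz constant. The convergence analysis there yields $d^\star-d(u_k)\le \beta_{k-1}^2\,\tilde L\, Q(u^\star,w_0)$ for all $k\ge 1$, provided $\beta_0=1$ and $\beta_{k+1}\le 2/(k+3)$; a straightforward induction on the $\beta_k$ recursion shows $\beta_{k-1}^2\le 4/(k+1)^2$, which gives \eqref{eq:fgdualconvrate}. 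The only verification needed is that the particular choices allowed for $U_k$ and $u_{k+1}$ in Steps~2 and~4 of Algorithm~\ref{alg:1memory} fit the admissibility conditions of Algorithm~1 in \cite{Tseng08}; this is immediate from how the algorithm is stated.

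Once \eqref{eq:fgdualconvrate} is in hand, the remaining four bounds follow by plugging into the primal error bounds established in Section~\ref{sec:PEB}. Specifically, \eqref{eq:fgprimalconvratedist} is obtained by applying \eqref{eq:xx<=sqrtddtheta} from Theorem~\ref{thm:xx<=cuu} with $u=u_k$ and substituting the right-hand side of \eqref{eq:fgdualconvrate}. For the optimality-gap bounds \eqref{eq:fgprimalconvratefunc} and \eqref{eq:fgprimalconvratefunclb} and the infeasibility bound \eqref{eq:fginfconvrate}, I would invoke Corollary~\ref{cor:primalfunctionvalueboundedsubgrad}, which states that \eqref{eq:ff<=usqrt2Lmp2sqrtdddd}, \eqref{eq:ff>=-usqrt2Ldd}, and \eqref{eq:Delta<=2Ldd} hold globally on $D$ with $L(\Phi(S))$ replaced by $\tilde L$. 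Substituting \eqref{eq:fgdualconvrate} into these three inequalities and simplifying $\sqrt{4\tilde L Q(u^\star,w_0)/(k+1)^2}=2\sqrt{\tilde L Q(u^\star,w_0)}/(k+1)$ yields the stated right-hand sides.

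The only genuine obstacle is the first step: confirming that Algorithm~\ref{alg:1memory} is in fact a legitimate specialization of \cite[Algorithm~1]{Tseng08}, so that the $O(1/k^2)$ guarantee applies to iterates that are always kept in the dual-feasible set $D$. The strict convexity and $Q(u,v)\ge\frac{1}{2}\|u-v\|^2$ properties of the prox function $h$, together with the fact that $U_k\cap D$ is used in the prox step for $w_{k+1}$ and that the descent inequality on $u_{k+1}$ is imposed explicitly, ensure that Tseng's telescoping argument carries through without modification. Everything else in the proposition is then purely mechanical substitution.
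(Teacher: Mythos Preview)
Your proposal is correct and follows exactly the same route as the paper: invoke the convergence bound from the proof of \cite[Corollary~1]{Tseng08} to obtain \eqref{eq:fgdualconvrate}, then combine it with \eqref{eq:xx<=sqrtddtheta} from Theorem~\ref{thm:xx<=cuu} for \eqref{eq:fgprimalconvratedist} and with Corollary~\ref{cor:primalfunctionvalueboundedsubgrad} for \eqref{eq:fgprimalconvratefunc}--\eqref{eq:fginfconvrate}. Your write-up is in fact more detailed than the paper's terse version, but the structure and ingredients are identical.
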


\begin{proof}
See Appendix~\ref{ssec:proofprofgdualprimalconvrate}.
\end{proof}

Proposition~\ref{pro:fgdualprimalconvrate} says that Algorithm~\ref{alg:1memory} yields $O(1/k^2)$ convergence rate of $(u_k)_{k=1}^\infty$ in dual optimality. In addition, the distance between $\bar{x}(u_k)$ and $x^\star$ as well as the primal infeasibility of $\bar{x}(u_k)$ vanishes at a rate no worse than $O(1/k)$. As Algorithm~\ref{alg:1memory} does not guarantee that $(u_k)_{k=1}^\infty$ is bounded, it says nothing about the convergence rate of $f(\bar{x}(u_k))$. Nevertheless, if problem~\eqref{eq:generalprimalprob} has only inequality constraints, then the dual optimal set is bounded \cite{Nedic09b} and so is $(u_k)_{k=1}^\infty$. This leads to the following proposition, which states that in the absence of equality constraints, $f(\bar{x}(u_k))$ converges to $f^\star$ at a rate $O(1/k)$ after some finite time:

\begin{proposition}\label{pro:fgdualprimalconvrateineq}
Consider problem~\eqref{eq:generalprimalprob} under Assumptions~\ref{asm:generalprimalprob} and~\ref{asm:boundedsubgrad}. Suppose $p=0$. Let $(u_k)_{k=0}^\infty\subset D$ be a sequence generated by Algorithm~\ref{alg:1memory}. Also, let $\tilde{x}\in\mathbb{R}^n$ satisfy Assumption~\ref{asm:generalprimalprob}(c), $u^\star\in D^\star$, and $\bar{u}\in D\backslash D^\star$. Then,
\begin{align}
f(\bar{x}(u_k))-f^\star\le&\frac{\tilde{L}(d(\bar{u})-f(\tilde{x}))\bigl(8(m+p)Q(u^\star,w_0)\bigr)^{1/2}}{(k+1)(\max_{i\in\{1,2,\ldots,m\}}g^{(i)}(\tilde{x}))}+\frac{4\tilde{L}Q(u^\star,w_0)}{(k+1)^2},\nonumber\displaybreak[0]\\
&\qquad\qquad\qquad\qquad\qquad\qquad\forall k>\Bigl(\frac{4\tilde{L}Q(u^\star,w_0)}{d^\star-d(\bar{u})}\Bigr)^{1/2}.\label{eq:fgprimalconvratefuncineq}
\end{align}
\end{proposition}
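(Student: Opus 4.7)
The plan is to combine the general primal-value upper bound already established in Proposition~\ref{pro:fgdualprimalconvrate} with an \emph{a posteriori} bound on the norm of the dual iterates that is obtained from Slater's condition, valid as soon as the dual function value at $u_k$ has climbed above $d(\bar{u})$. The $O(1/k^{2})$ dual convergence rate \eqref{eq:fgdualconvrate} will tell us exactly when that threshold is crossed.

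\emph{Step 1: Slater-type bound on $\|u_k\|_\infty$.} I will exploit $p=0$ and the strict feasibility of $\tilde{x}$ (Assumption~\ref{asm:generalprimalprob}(c), so $g^{(i)}(\tilde{x})<0$ for every $i$). For any $u\in D$ the definition of $d$ gives
\begin{align*}
d(u)\le \mathcal{L}(\tilde{x},u)=f(\tilde{x})+\sum_{i=1}^{m}u^{(i)}g^{(i)}(\tilde{x})\le f(\tilde{x})+\bigl(\max_{i}g^{(i)}(\tilde{x})\bigr)\|u\|_1,
\end{align*}
where the last inequality uses $u^{(i)}\ge 0$ and $g^{(i)}(\tilde{x})\le\max_j g^{(j)}(\tilde{x})<0$. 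Rearranging (both sides of the resulting inequality are divided by the negative quantity $\max_i g^{(i)}(\tilde{x})$, which flips the direction) yields
\begin{align*}
\|u\|_\infty\le\|u\|_1\le\frac{d(u)-f(\tilde{x})}{\max_{i}g^{(i)}(\tilde{x})}.
\end{align*}
If in addition $d(u)\ge d(\bar{u})$, then $d(u)-f(\tilde{x})\ge d(\bar{u})-f(\tilde{x})$ (both negative since $d(\bar{u})\le d^\star=f^\star\le f(\tilde{x})$ by weak duality), and dividing by $\max_i g^{(i)}(\tilde{x})<0$ once more flips the inequality to give $\|u\|_\infty\le(d(\bar{u})-f(\tilde{x}))/\max_i g^{(i)}(\tilde{x})$.

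\emph{Step 2: When the threshold $d(u_k)\ge d(\bar{u})$ is met.} Since $\bar{u}\notin D^\star$, $d^\star-d(\bar{u})>0$. From \eqref{eq:fgdualconvrate}, $d^\star-d(u_k)\le 4\tilde{L}Q(u^\star,w_0)/(k+1)^2$, so $d(u_k)\ge d(\bar{u})$ is guaranteed as soon as
\begin{align*}
(k+1)^{2}\ge\frac{4\tilde{L}Q(u^\star,w_0)}{d^\star-d(\bar{u})},
\end{align*}
which is implied by the hypothesis $k>\sqrt{4\tilde{L}Q(u^\star,w_0)/(d^\star-d(\bar{u}))}$.

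\emph{Step 3: Assembling the bound.} For all such $k$, substitute the $\|u_k\|_\infty$-bound from Step~1 into \eqref{eq:fgprimalconvratefunc}. With $p=0$ the result is exactly the right-hand side of \eqref{eq:fgprimalconvratefuncineq}, completing the proof.

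The only non-routine ingredient is the Slater-based estimate on $\|u_k\|_1$; everything else is bookkeeping around \eqref{eq:fgdualconvrate} and \eqref{eq:fgprimalconvratefunc}. I do not foresee any serious obstacle, though care must be taken with the signs of $d(\bar{u})-f(\tilde{x})$ and $\max_i g^{(i)}(\tilde{x})$, both of which are negative, so that the final quotient (and hence the stated bound) is correctly positive.
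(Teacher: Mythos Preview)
Your proposal is correct and follows essentially the same approach as the paper: bound $\|u_k\|_\infty$ via the Slater point once $d(u_k)\ge d(\bar u)$, use \eqref{eq:fgdualconvrate} to determine when that threshold is crossed, and substitute into \eqref{eq:fgprimalconvratefunc}. The only difference is that the paper invokes \cite[Lemma~1]{Nedic09b} for the Slater-based bound $\|u\|_\infty\le\|u\|\le(d(\bar u)-f(\tilde x))/\max_i g^{(i)}(\tilde x)$, whereas you derive it explicitly (via $\|u\|_1$), which is entirely fine.
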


\begin{proof}
See Appendix~\ref{ssec:proofprofgdualprimalconvrateineq}.
\end{proof}

\begin{remark}
In addition to Algorithm~\ref{alg:1memory}, \emph{i.e.}, the $1$-memory fast gradient method in \cite{Tseng08}, the dual problem~\eqref{eq:generaldualprob} can also be solved by the $\infty$-memory fast gradient method in \cite{Tseng08}, which would produce similar primal and dual convergence rates as those in Propositions~\ref{pro:fgdualprimalconvrate} and~\ref{pro:fgdualprimalconvrateineq}. Due to space limitation and since such analyses are very similar to that in Propositions~\ref{pro:fgdualprimalconvrate} and~\ref{pro:fgdualprimalconvrateineq}, we omit this algorithm in the paper.
\end{remark}

Recall that for the linearly constrained problem~\eqref{eq:linearprimalprob}, the dual function $d$ is differentiable and has globally Lipschitz continuous gradient with Lipschitz constant $\sigma_{\max}^2(\tilde{A})/\theta$. In this case, the following algorithm, which has a simpler form than Algorithm~\ref{alg:1memory}, can be adopted to solve the dual problem:

\begin{algorithm}[Algorithm~2, \cite{Tseng08}]\label{alg:1memorywholespace}
\begin{algorithminit}{}
\item Let $\beta_0=\beta_{-1}=1$ and choose $u_0=u_{-1}\in D$.
\end{algorithminit}
\begin{algorithmoper}{At each time $k\ge0$:}
\item Choose a closed convex set $U_k\subseteq\mathbb{R}^{m+p}$ such that $U_k\cap D^\star\neq\emptyset$.
\item Let $v_k=u_k+\beta_k(1/\beta_{k-1}-1)(u_k-u_{k-1})$ and \\
\mbox{\quad\;\;\,}$u_{k+1}=\operatorname{arg\;min}_{u\in U_k\cap D}\ell_{-d}(u,v_k)+\frac{\sigma_{\max}^2(\tilde{A})}{2\theta}\|u-v_k\|^2$.
\item Choose $\beta_{k+1}\le2/(k+3)$.
\end{algorithmoper}
\end{algorithm}

If we pick $U_k\supseteq D$, then the update of $u_{k+1}$ in Step~3 is a projected gradient step $u_{k+1}=\mathcal{P}_D[v_k+\nabla d(v_k)\theta/\sigma_{\max}^2(\tilde{A})]$. If we also choose $\beta_{k+1}=\left(\sqrt{\beta_k^4+4\beta_k^2}-\beta_k^2\right)/2$, then Algorithm~\ref{alg:1memorywholespace} becomes the fast iterative shrinkage-thresholding algorithm (FISTA) in \cite{Beck09} applied to solve the dual problem. The primal and dual convergence rates of Algorithm~\ref{alg:1memorywholespace}, which have the same order as Algorithm~\ref{alg:1memory} but have a more explicit form, are given below:

\begin{proposition}\label{pro:fgdualprimalconvratewholdspace}
Consider the linearly constrained problem~\eqref{eq:linearprimalprob} under Assumption~\ref{asm:generalprimalprob}. Let $(u_k)_{k=0}^\infty\subset D$ be a sequence generated by Algorithm~\ref{alg:1memorywholespace} and $u^\star\in D^\star$. Then, for any $k\ge1$,
\begin{align}
d^\star-d(u_k)&\le\frac{2\sigma_{\max}^2(\tilde{A})\|u_0-u^\star\|^2}{\theta(k+1)^2},\label{eq:fgdualconvratewholespace}\displaybreak[0]\\
\|\bar{x}(u_k)-x^\star\|&\le\frac{2\sigma_{\max}(\tilde{A})\|u_0-u^\star\|}{\theta(k+1)},\label{eq:fgprimalconvratedistwholespace}\displaybreak[0]\\
-\frac{2\|u^\star\|\sigma_{\max}^2(\tilde{A})\|u_0-u^\star\|}{\theta(k+1)}&\le f(\bar{x}(u_k))-f^\star\le\frac{2\|u_k\|\sigma_{\max}^2(\tilde{A})\|u_0-u^\star\|}{\theta(k+1)},\label{eq:fgprimalconvratefuncwholespace}\displaybreak[0]\\
\Delta(\bar{x}(u_k))&\le\frac{2\sigma_{\max}^2(\tilde{A})\|u_0-u^\star\|}{\theta(k+1)}.\label{eq:fginfconvratewholespace}
\end{align}
Moreover, if $p=0$, then
\begin{align}
f(\bar{x}(u_k))-f^\star\le&\frac{2\sigma_{\max}^2(\tilde{A})(d(\bar{u})-f(\tilde{x}))\|u_0-u^\star\|}{\theta(k+1)\max_{i\in\{1,2,\ldots,m\}}g^{(i)}(\tilde{x})},\nonumber\displaybreak[0]\\
&\qquad\qquad\forall k>\sigma_{\max}(\tilde{A})\|u_0-u^\star\|\left(\frac{2\theta^{-1}}{d^\star-d(\bar{u})}\right)^{1/2},\label{eq:fgprimalconvratefuncineqwholespace}
\end{align}
where $\tilde{x}$ and $\bar{u}$ are defined as in Proposition~\ref{pro:fgdualprimalconvrateineq}.
\end{proposition}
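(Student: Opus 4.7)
The strategy is to assemble the proposition from four ingredients, all of which are already in place. Since Example~\ref{ex:linear} records that, for the linearly constrained problem~\eqref{eq:linearprimalprob} under Assumption~\ref{asm:generalprimalprob}, $\nabla d$ is globally Lipschitz continuous on $\mathbb{R}^{m+p}$ with constant $\sigma_{\max}^2(\tilde{A})/\theta$, Algorithm~\ref{alg:1memorywholespace} is a legitimate instance of Algorithm~2 in \cite{Tseng08} (equivalently, of FISTA \cite{Beck09} with the specific choice $\beta_{k+1}=(\sqrt{\beta_k^4+4\beta_k^2}-\beta_k^2)/2$). Invoking the standard convergence analysis of that algorithm, applied to $-d$ with Lipschitz constant $\sigma_{\max}^2(\tilde{A})/\theta$, yields the dual rate \eqref{eq:fgdualconvratewholespace} immediately.

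Given the dual rate, the next three bounds are pure substitution. First, I will plug \eqref{eq:fgdualconvratewholespace} into the bound \eqref{eq:xx<=sqrtddtheta} of Theorem~\ref{thm:xx<=cuu} to obtain \eqref{eq:fgprimalconvratedistwholespace}. Next, the two-sided primal-objective estimate \eqref{eq:fgprimalconvratefuncwholespace} follows by inserting \eqref{eq:fgdualconvratewholespace} into \eqref{eq:ff<=u2lambdamaxAAthetadd} of Proposition~\ref{pro:primalfunctionvaluelinear}, and the infeasibility bound \eqref{eq:fginfconvratewholespace} follows from \eqref{eq:Delta<=2sigmathetadd} in the same way. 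None of these steps require any additional structural argument.

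The only piece needing a separate argument is \eqref{eq:fgprimalconvratefuncineqwholespace}, whose purpose is to replace the a priori unbounded factor $\|u_k\|$ in the upper bound of \eqref{eq:fgprimalconvratefuncwholespace} by an explicit constant when $p=0$. Here I will reuse the Slater-point argument from the proof of Proposition~\ref{pro:fgdualprimalconvrateineq}: for every $u\in D$, weak duality and nonnegativity of $u^{(i)}$ give $d(u)\le f(\tilde{x})+\sum_{i=1}^m u^{(i)}g^{(i)}(\tilde{x})\le f(\tilde{x})+(\max_i g^{(i)}(\tilde{x}))\|u\|_1$, and since $\max_i g^{(i)}(\tilde{x})<0$ while $d(u)-f(\tilde{x})\le 0$, rearranging yields $\|u\|\le\|u\|_1\le(d(u)-f(\tilde{x}))/\max_i g^{(i)}(\tilde{x})$. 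Once $d(u_k)\ge d(\bar{u})$ this gives $\|u_k\|\le(d(\bar{u})-f(\tilde{x}))/\max_i g^{(i)}(\tilde{x})$, and \eqref{eq:fgdualconvratewholespace} shows that $d(u_k)\ge d(\bar{u})$ holds as soon as the iteration threshold stated in \eqref{eq:fgprimalconvratefuncineqwholespace} is satisfied. Substituting this bound on $\|u_k\|$ into the upper bound of \eqref{eq:fgprimalconvratefuncwholespace} will then produce \eqref{eq:fgprimalconvratefuncineqwholespace}.

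The main potential obstacle is bookkeeping in the Slater-point estimate, where both numerator and denominator of the bound on $\|u\|$ are nonpositive, so signs must be tracked carefully when the inequality is inverted; apart from this, every step is an algebraic specialization of a result already proved in the paper, and no new convergence analysis is required.
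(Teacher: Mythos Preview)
Your proposal is correct and follows essentially the same route as the paper: invoke the Tseng/FISTA rate for the dual (\cite[Corollary~2]{Tseng08}) using the global Lipschitz constant $\sigma_{\max}^2(\tilde{A})/\theta$, then substitute into \eqref{eq:xx<=sqrtddtheta}, \eqref{eq:ff<=u2lambdamaxAAthetadd}, and \eqref{eq:Delta<=2sigmathetadd}, and finally handle the $p=0$ case by the Slater-point bound on $\|u_k\|$ exactly as in the proof of Proposition~\ref{pro:fgdualprimalconvrateineq}. The only cosmetic difference is that you spell out the Slater argument explicitly, whereas the paper just cites \cite[Lemma~1]{Nedic09b}.
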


\begin{proof}
See Appendix~\ref{ssec:proofprofgdualprimalconvratewholdspace}.
\end{proof}

The $O(1/k)$ primal convergence rates in~\eqref{eq:fgprimalconvratedistwholespace} and~\eqref{eq:fginfconvratewholespace} for linearly constrained problem~\eqref{eq:linearprimalprob} are also provided in \cite{Giselsson13,Beck14}. Moreover, as is shown in \cite{Beck14}, $\sigma_{\max}(\tilde{A})$ can be replaced by $\|\tilde{A}\|_{2,\infty}\triangleq\{\|\tilde{A}x\|_\infty:\|x\|=1\}$.

Compared with the projected dual gradient method \eqref{eq:gradientdescentnonlinear}, the fast dual gradient methods considered above are capable of increasing the primal convergence rates from $O(1/\sqrt{k})$ to $O(1/k)$. However, the problems that these methods can handle must satisfy Assumption~\ref{asm:boundedsubgrad}, which is not necessary for projected dual gradient method. On the other hand, in order to guarantee the sublinear dual and primal convergence rates, the projected dual gradient method has to satisfy Assumption~\ref{asm:inequalityconstraints} while the fast dual gradient methods do not. Moreover, the fast dual gradient methods are more complicated to implement---in addition to solving $\min_{x\in X}\mathcal{L}(x,u_k)$ for constructing the approximate primal solution $\bar{x}(u_k)$ that is also needed in the projected dual gradient method, they have to solve $\min_{x\in X}\mathcal{L}(x,v_k)$ in order to compute $\nabla d(v_k)$. 

\section{Numerical example}\label{sec:numeexam}

In this section, we compare the dual and primal convergence performance of the dual first-order methods in Section~\ref{sec:CPI} and the double smoothing method \cite{Devolder12} via a numerical example. 

We consider the following model predictive control (MPC) problem, which has a very similar form as the one formulated in \cite{Giselsson13}:
\begin{align*}
\begin{array}{ll}\underset{x\in \mathbb{R}^{n}}{\mbox{minimize}} & f(x)=\frac{1}{2}x^THx+t^Tx+\gamma\|Px-s\|_1\\ \operatorname{subject\,to} & A_1x+b_1\le0,\\ &A_2x+b_2=0,\\ & x\in \{y\in\mathbb{R}^n:|y^{(i)}|\le r_i\;\forall i\in\{1,2,\ldots,n\}\},\end{array}
\end{align*}
where $H\in\mathbb{R}^{n\times n}$ is positive definite, $t\in\mathbb{R}^n$, $\gamma>0$, $P\in\mathbb{R}^{q\times n}$, $s\in\mathbb{R}^q$, $A_1\in\mathbb{R}^{m\times n}$, $b_1\in\mathbb{R}^m$, $A_2\in\mathbb{R}^{p\times n}$, $b_1\in\mathbb{R}^p$, $r_i>0$, all of which are randomly generated with $n=10$, $q=5$, $m=3$, and $p=2$. Note that such a linearly constrained problem belongs to the intersection of the problem classes that the projected dual gradient method \eqref{eq:gradientdescentnonlinear}, the fast dual gradient methods in Algorithms~\ref{alg:1memory} and~\ref{alg:1memorywholespace}, and the double smoothing method in \cite{Devolder12} can handle. Also, since Algorithm~\ref{alg:1memory} has similar convergence rates as Algorithm~\ref{alg:1memorywholespace} for this problem, we omit Algorithm~\ref{alg:1memory} to be able to visualize the results better.

For the projected dual gradient method \eqref{eq:gradientdescentnonlinear}, we choose the step-size\linebreak[4] $\alpha=2\lambda_{\min}(H)/\lambda_{\max}(A_1^TA_1+A_2^TA_2)\times99\%$, which satisfies the step-size condition in Example~\ref{ex:linear}. For the fast gradient method in Algorithm~\ref{alg:1memorywholespace}, we choose the parameters $U_k$ and $\beta_k$ to be such that this method reduces to FISTA \cite{Beck09}. For the double smoothing method \cite{Devolder12}, since the linear constraint of the problem class that this method can handle is in the form of $\mathcal{A}x\in T$ with $\mathcal{A}$ being a linear operator and $T$ being a compact set, we put $\mathcal{A}=A_2$, $T=\{b_2\}$, and view $\{y\in\mathbb{R}^n:A_1y+b_1\le0,\;|y^{(i)}|\le r_i\;\forall i=1,2,\ldots,n\}$ as its set constraint. Moreover, since one smoothing parameter in the double smoothing method relies on an upper bound on some dual optimum, we adopt its practical implementation version in \cite{Devolder12}, which starts with an initial guess of this upper bound and repeatedly applying the method to a sequence of doubly smoothed dual problems with increasing guess on the upper bound until a correct guess is achieved. We choose the desired accuracy $\epsilon$ of the method to be $0.05$.

In addition to the approximate primal solution $\bar{x}(u_k)$ studied in this paper, we also consider in the simulation the average $\tilde{x}_k\triangleq\sum_{\ell=0}^k\bar{x}(u_\ell)/k$ of the primal iterates as in \cite{Nedic09b} for the projected dual gradient method and FISTA, and the running average $\hat{x}_k\triangleq(\sum_{\ell=0}^k\beta_\ell^{-1}\bar{x}(u_\ell))/(\sum_{\ell=0}^k\beta_\ell^{-1})$ with the weights being $1/\beta_\ell$ as in \cite{Patrinos13} for FISTA. For the double smoothing method, $u_k$ is a sequence generated by a fast gradient method in \cite[Sec. 2.2.1]{Nesterov04} applied to the smoothed dual; the approximate primal solution $\bar{x}(u_k)$ for this specific problem is the unique minimizer of the Lagrangian of the original problem.

\begin{figure}[tb]
\centering\includegraphics[width=4in]{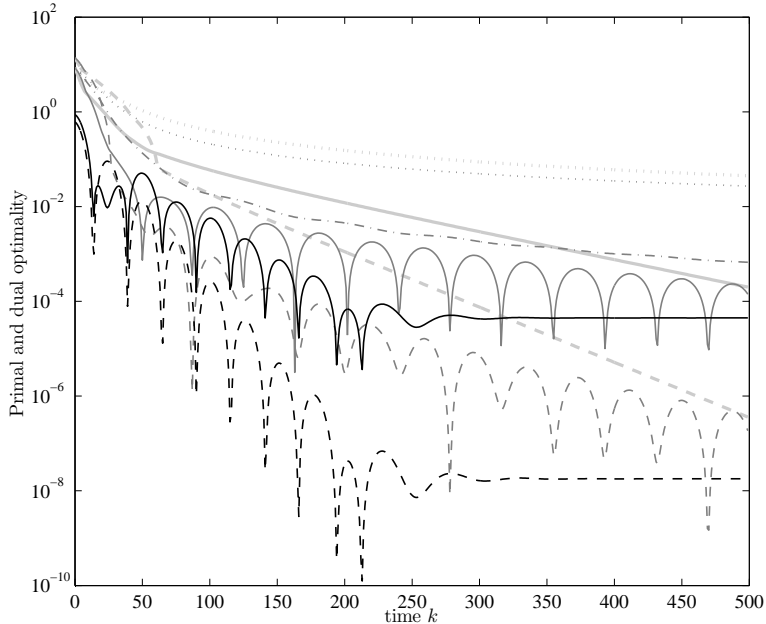}
\caption{Primal optimality of approximate primal solutions and dual optimality of dual iterates (The light grey, grey, and black dashed curves represent the dual optimality $d^\star-d(u_k)$ of $u_k$ in the projected dual gradient method, FISTA, and the double smoothing method, respectively. The light grey, grey, and black solid curves represent the primal optimality $|f(\bar{x}(u_k))-f^\star|$ of $\bar{x}(u_k)$ in the projected dual gradient method, FISTA, and the double smoothing method, respectively. The light grey and grey dotted curves represent the primal optimality $|f(\tilde{x}_k)-f^\star|$ of $\tilde{x}_k$ in the projected dual gradient method and FISTA, respectively. The grey dash-dotted curve represents the primal optimality $|f(\hat{x}_k)-f^\star|$ of $\hat{x}_k$ in FISTA.).}
\label{fig:primaldualopt}
\end{figure}

Figure~\ref{fig:primaldualopt} compares the convergence of the dual iterates generated by the three methods mentioned above, alongside with various choices for the approximate primal solution. Generally speaking, the dual convergence rate is faster than the primal in all of the three methods. Also, the primal iterates have faster convergence than their averages in the projected dual gradient method and FISTA. The projected dual gradient method converges slower than the other two methods in dual and primal optimality. FISTA and the double smoothing method have comparable performance, but the double smoothing method has the drawback that it only guarantees a prespecified target accuracy and does not ensure asymptotic convergence. 

\section{Conclusions}\label{sec:concl}

This paper studied primal convergence properties of dual first-order methods for solving optimization problems with a strongly convex objective function and general convex constraints including nonlinear inequality, linear equality, and set constraints. The unique minimizer of the Lagrangian at the current dual iterate, which is needed for evaluating the dual gradient, was considered as an approximate primal solution. The errors of this approximate primal solution, both in optimality and feasibility, were related to the dual errors. Sublinear dual and primal convergence rates for the projected dual gradient method and a few fast dual gradient methods were established. 

It is notable that this is the first work that ever provides primal convergence rates of dual first-order methods for \emph{nonlinearly} constrained convex optimization with \emph{locally} Lipschitz dual gradient. This work may also bring insights to future research on the convergence performance of other approximate primal solutions such as running averages of the primal iterates in dual first-order methods.

\section{Appendix}\label{sec:app}

\subsection{Proof of Lemma~\ref{lem:xbounded}}\label{ssec:prooflemxbounded}

Let $S\subset D$ be compact, $\bar{c}=\max_{u\in S}d(u)$, and $\underline{c}=\min_{u\in S}d(u)$, which are bounded due to the continuity of $d$. Also let $\partial_x\mathcal{L}$ denote the subdifferential of $\mathcal{L}$ with respect to the first argument. Fix $u_0\in S$. Since the function $\mathcal{L}(\cdot,u_0)$ is strongly convex over $X$, the sublevel set $\mathcal{C}\triangleq\{x\in X:\;\mathcal{L}(x,u_0)\le\bar{c}\}$ is nonempty and compact. Thus, $s\triangleq\max_{x\in\mathcal{C}}\sum_{i=1}^m(g^{(i)}(x))^2+\|Ax+b\|^2\in[0,\infty)$. Also let $r=\max_{u\in S}\|u-u_0\|\in[0,\infty)$. Then, for any $u\in S$ and any $x\in\mathcal{C}$,
\begin{align}
\mathcal{L}(x,u_0)-r\sqrt{s}\le\mathcal{L}(x,u)\le\mathcal{L}(x,u_0)+r\sqrt{s}.\label{eq:Lrsqrts<=L<=Lrsqrts}
\end{align}
To prove the boundedness of $\{\bar{x}(u):u\in S\}$, assume to the contrary that it is unbounded. Thus, given $x_0\in\mathcal{C}$, there exists $u'\in S$ such that $\|\bar{x}(u')-x_0\|^2>\frac{2(\mathcal{L}(x_0,u_0)+r\sqrt{s}-\underline{c})}{\theta}\ge0$. Since $\mathcal{L}(\cdot,u')$ is strongly convex over $X$ with convexity parameter $\theta$ and since there exists a subgradient $\tilde{\nabla}_x\mathcal{L}(\bar{x}(u'),u')\in\partial_x\mathcal{L}(\bar{x}(u'),u')$ satisfying $\tilde{\nabla}_x(\mathcal{L}(\bar{x}(u'),u'))^T(x-\bar{x}(u'))\ge0$ $\forall x\in X$, we have $\mathcal{L}(x_0,u')-\underline{c}\ge\mathcal{L}(x_0,u')-\mathcal{L}(\bar{x}(u'),u')\ge\frac{\theta}{2}\|\bar{x}(u')-x_0\|^2$. This gives $\mathcal{L}(x_0,u')>\mathcal{L}(x_0,u_0)+r\sqrt{s}$, which contradicts \eqref{eq:Lrsqrts<=L<=Lrsqrts}. Therefore, $\{\bar{x}(u):u\in S\}$ is bounded.

\subsection{Proof of Lemma~\ref{lem:xx<=cuu}}\label{ssec:prooflemxx<=cuu}

Let $u,v\in D$. From \cite[Theorem~3, Sec. 7.1.2]{Polyak87}, there exist subgradients $\tilde{\nabla}_x\mathcal{L}(\bar{x}(u),u)\in\partial_x\mathcal{L}(\bar{x}(u),u)$ and $\tilde{\nabla}_x\mathcal{L}(\bar{x}(v),v)\in\partial_x\mathcal{L}(\bar{x}(v),v)$ such that
\begin{align}
\tilde{\nabla}_x\mathcal{L}(\bar{x}(u),u)^T(\bar{x}(v)-\bar{x}(u))\ge0,\label{eq:L1x2x1>=0}\\
\tilde{\nabla}_x\mathcal{L}(\bar{x}(v),v)^T(\bar{x}(u)-\bar{x}(v))\ge0.\label{eq:L2x1x2>=0}
\end{align}
Due to \cite[Prop. 4.2.4]{Bertsekas03}, $\partial_x\mathcal{L}(x,u)=\partial f(x)+\sum_{i=1}^mu^{(i)}\partial g^{(i)}(x)+A^Tu^{(m+1:m+p)}$ $\forall x\in\mathbb{R}^n$ $\forall u\in D$. Thus, there exist subgradients $\tilde{\nabla}f(\bar{x}(u))\in\partial f(\bar{x}(u))$, $\tilde{\nabla}f(\bar{x}(v))\in\partial f(\bar{x}(v))$, and $\tilde{\nabla}g^{(i)}(\bar{x}(u))\in\partial g^{(i)}(\bar{x}(u))$, $\tilde{\nabla}g^{(i)}(\bar{x}(v))\in\partial g^{(i)}(\bar{x}(v))$, $\forall i\in\{1,2,\ldots,m\}$ such that
\begin{align*}
\tilde{\nabla}_x\mathcal{L}(x,u)=\tilde{\nabla}f(\bar{x}(u))+\sum_{i=1}^mu^{(i)}\tilde{\nabla}g^{(i)}(\bar{x}(u))+A^Tu^{(m+1:m+p)},\\
\tilde{\nabla}_x\mathcal{L}(x,v)=\tilde{\nabla}f(\bar{x}(v))+\sum_{i=1}^mv^{(i)}\tilde{\nabla}g^{(i)}(\bar{x}(v))+A^Tv^{(m+1:m+p)}.
\end{align*}
Adding \eqref{eq:L1x2x1>=0} and \eqref{eq:L2x1x2>=0} and substituting the above into the resulting inequality, we obtain
\begin{align}
\Bigl(\tilde{\nabla}f(\bar{x}(u))-\tilde{\nabla}f(\bar{x}(v))\Bigr)^T&(\bar{x}(u)-\bar{x}(v))\le\Bigl(u^{(m+1:m+p)}-v^{(m+1:m+p)}\Bigr)^TA(\bar{x}(v)-\bar{x}(u))\nonumber\\&+\Bigl(\sum_{i=1}^mu^{(i)}\tilde{\nabla}g^{(i)}(\bar{x}(u))-v^{(i)}\tilde{\nabla}g^{(i)}(\bar{x}(v))\Bigr)^T(\bar{x}(v)-\bar{x}(u)).\label{eq:tnftnfxx<=sumugugxx}
\end{align}
Note that
\begin{align}
&\Bigl(u^{(m+1:m+p)}-v^{(m+1:m+p)}\Bigr)^TA(\bar{x}(v)-\bar{x}(u))\nonumber\\
&\quad\le\|A^T(u^{(m+1:m+p)}-v^{(m+1:m+p)})\|\cdot\|\bar{x}(u)-\bar{x}(v)\|\nonumber\\
&\quad\le\sigma_{\max}(A)\|\bar{x}(u)-\bar{x}(v)\|\cdot\|u^{(m+1:m+p)}-v^{(m+1:m+p)}\|.\label{eq:uuAxx<=lambdamaxxxuu}
\end{align}
In addition,
\begin{align}
&\Bigl(\sum_{i=1}^mu^{(i)}\tilde{\nabla}g^{(i)}(\bar{x}(u))-v^{(i)}\tilde{\nabla}g^{(i)}(\bar{x}(v))\Bigr)^T(\bar{x}(v)-\bar{x}(u))\displaybreak[0]\nonumber\\
=&\sum_{i=1}^mu^{(i)}\Bigl(\tilde{\nabla}g^{(i)}(\bar{x}(u))-\tilde{\nabla}g^{(i)}(\bar{x}(v))\Bigr)^T(\bar{x}(v)-\bar{x}(u))\displaybreak[0]\nonumber\\
&+\Bigl(u^{(i)}-v^{(i)}\Bigr)\tilde{\nabla}g^{(i)}(\bar{x}(v))^T(\bar{x}(v)-\bar{x}(u))\displaybreak[0]\nonumber\\
\le&\sum_{i=1}^m\|\tilde{\nabla}g^{(i)}(\bar{x}(v))\|\cdot\|\bar{x}(u)-\bar{x}(v)\|\cdot|u^{(i)}-v^{(i)}|,\label{eq:sumuggxx<=sumtngxxuu}
\end{align}
where the inequality holds because for each $i\in\{1,2,\ldots,m\}$, $g^{(i)}$ is convex and $u^{(i)}\ge0$. 
From \eqref{eq:tnftnfxx<=sumugugxx}, \eqref{eq:sumuggxx<=sumtngxxuu}, \eqref{eq:uuAxx<=lambdamaxxxuu}, and the strong convexity of $f$ over $X$, we have
\begin{align*}
\theta\|\bar{x}(u)-\bar{x}(v)\|^2\le&\Bigl(\sum_{i=1}^m\|\tilde{\nabla}g^{(i)}(\bar{x}(v))\|\cdot\|\bar{x}(u)-\bar{x}(v)\|\cdot|u^{(i)}-v^{(i)}|\Bigr)\\
&+\sigma_{\max}(A)\|\bar{x}(u)-\bar{x}(v)\|\cdot\|u^{(m+1:m+p)}-v^{(m+1:m+p)}\|,
\end{align*}
which yields
\begin{align}
&\|\bar{x}(u)-\bar{x}(v)\|\le\frac{1}{\theta}\Bigl(\Bigl(\sum_{i=1}^m\sup_{q\in\partial g^{(i)}(\bar{x}(v))}\!\!\!\!\!\!\!\!\!\!\|q\|\!\cdot\!|u^{(i)}\!-\!v^{(i)}|\Bigr)\!+\!\sigma_{\max}(A)\|u^{(m+1:m+p)}\!-\!v^{(m+1:m+p)}\|\Bigr)\label{eq:xx<=uutight}\\
&\le\frac{1}{\theta}\max\{\sigma_{\max}(A),\sup_{q\in G(v)}\|q\|\}\Bigl(\Bigl(\sum_{i=1}^m|u^{(i)}-v^{(i)}|\Bigr)+\|u^{(m+1:m+p)}-v^{(m+1:m+p)}\|\Bigr)\displaybreak[0]\nonumber\\
&\le\frac{1}{\theta}\max\{\sigma_{\max}(A),\sup_{q\in G(v)}\|q\|\}\displaybreak[0]\nonumber\\
&\quad\cdot\Bigl[(m+1)\Bigl(\Bigl(\sum_{i=1}^m|u^{(i)}-v^{(i)}|^2\Bigr)+\|u^{(m+1:m+p)}-v^{(m+1:m+p)}\|^2\Bigr)\Bigr]^{1/2}\displaybreak[0]\nonumber\\
&=\gamma(v)\|u-v\|.\nonumber
\end{align}
Since $u$ and $v$ are interchangeable, \eqref{eq:xx<=uuloose} is satisfied.

\subsection{Proof of Theorem~\ref{thm:xx<=cuu}}\label{ssec:proofthmxx<=cuu}

Let $u\in D$ and $u^\star\in D^\star$. Then, by letting $v$ in Lemma~\ref{lem:xx<=cuu} be $u^\star$, we get \eqref{eq:xx<=gammauu}. To derive \eqref{eq:xx<=sqrtddtheta}, note that $\mathcal{L}(\cdot,u)$ is strongly convex on $X$ with convexity parameter $\theta$. Also note that there exists a subgradient $\tilde{\nabla}_x\mathcal{L}(\bar{x}(u),u)\in\partial_x\mathcal{L}(\bar{x}(u),u)$ such that $\tilde{\nabla}_x\mathcal{L}(\bar{x}(u),u)^T(x-\bar{x}(u))\ge0$ $\forall x\in X$. As a result, $\frac{\theta}{2}\|\bar{x}(u)-x^\star\|^2\le\mathcal{L}(x^\star,u)-\mathcal{L}(\bar{x}(u),u)-\tilde{\nabla}_x\mathcal{L}(\bar{x}(u),u)^T(x^\star-\bar{x}(u))\le\mathcal{L}(x^\star,u)-\mathcal{L}(\bar{x}(u),u)$.
From the Lagrangian saddle point theorem \cite[Prop. 6.2.4]{Bertsekas03}, $\mathcal{L}(x^\star,u)\le\mathcal{L}(x^\star,u^\star)=d^\star$. Also, $\mathcal{L}(\bar{x}(u),u)=d(u)$. Therefore, \eqref{eq:xx<=sqrtddtheta} holds.

\subsection{Proof of Proposition~\ref{pro:gradientdualLipschitz}}\label{ssec:proofprogradientdualLipschitz}

Let $S\subset D$ be compact. Due to Lemma~\ref{lem:xx<=cuu}, for any $u,v\in S$,
\begin{align*}
\|\nabla d(u)-\nabla d(v)\|^2&=\|A(\bar{x}(u)-\bar{x}(v))\|^2+\sum_{i=1}^m\|g^{(i)}(\bar{x}(u))-g^{(i)}(\bar{x}(v))\|^2\displaybreak[0]\\
&\le\sigma_{\max}^2(A)\|\bar{x}(u)-\bar{x}(v)\|^2+\sum_{i=1}^mL_i^2\|\bar{x}(u)-\bar{x}(v)\|^2\displaybreak[0]\\
&\le\sup_{w\in S}\gamma^2(w)\Bigl(\sigma_{\max}^2(A)+\sum_{i=1}^mL_i^2\Bigr)\|u-v\|^2.
\end{align*}
Therefore, \eqref{eq:gdgd<=Luu} holds. In addition, if $S$ is convex, the proof of \cite[Lemma~1.2.3]{Nesterov04} can be applied to obtain \eqref{eq:ddgduu>=-L2uu}.

\subsection{Proof of Theorem~\ref{thm:primalfunctionvalue}}\label{ssec:proofthmprimalfunctionvalue}

Let $S\subset D$ be compact, $u\in S$, and $u^\star\in D^\star$. From \eqref{eq:ff<=-gdu}, we know that to find an upper bound on $f(\bar{x}(u))-f^\star$ in terms of $d^\star-d(u)$, it is sufficient to do so to $-\nabla d(u)^Tu$.

Let ${\mathcal A}(u)=\{i\in\{1,2,\ldots,m\}:u^{(i)}+\tfrac{1}{L(\Phi(S))}\nabla^{(i)}d(u)<0\}$ and ${\mathcal I}(u)=\{1,2,\ldots,m+p\}\backslash {\mathcal A}(u)$. These sets identify the components of the dual vector $u$ for which, after a gradient step, the projection onto $D$ will be active and inactive, respectively. 
Since $-\nabla d(u)^Tu=\Bigl(\sum_{i\in\mathcal{A}(u)}-\nabla^{(i)}d(u)u^{(i)}\Bigr)+\Bigl(\sum_{i\in\mathcal{I}(u)}-\nabla^{(i)}d(u)u^{(i)}\Bigr)$, below we derive upper bounds on $\sum_{i\in\mathcal{A}(u)}-\nabla^{(i)}d(u)u^{(i)}$ and $\sum_{i\in\mathcal{I}(u)}-\nabla^{(i)}d(u)u^{(i)}$. 

To this end, let $D^{(i)}=[0,\infty)$ for $i=1,2,\ldots,m$ and $D^{(i)}=\mathbb{R}$ for $i=m+1,\ldots,m+p$ and note that $(\mathcal{P}_D[v])^{(i)}=\mathcal{P}_{D^{(i)}}[v^{(i)}]$ for all $v\in\mathbb{R}^{m+p}$ and all $i$. In this way, 
\begin{align*}
	\mathcal{P}_{D^{(i)}}[u^{(i)}+\tfrac{1}{L(\Phi(S))}\nabla^{(i)}d(u)]&=\begin{cases} 0,\qquad & \mbox{if } i\in {\mathcal A}(u),\\
	u^{(i)}+\tfrac{1}{L(\Phi(S))}\nabla^{(i)}d(u), & \mbox{if } i \in {\mathcal I}(u). \end{cases}
\end{align*}
Also, since $\Phi(S)\supseteq S$, we have $L(\Phi(S))\ge L(S)$ and thus $\mathcal{P}_D[u+\tfrac{1}{L(\Phi(S))}\nabla d(u)]\in \Phi(S)$. It follows from \eqref{eq:ddgduu>=-L2uu} that
\begin{align}
&d^\star-d(u)\ge d(\mathcal{P}_D[u+\tfrac{1}{L(\Phi(S))}\nabla d(u)])-d(u)\nonumber\displaybreak[0]\\
&\ge\nabla d(u)^T\Bigl(\mathcal{P}_D[u+\tfrac{1}{L(\Phi(S))}\nabla d(u)]-u\Bigr)-\frac{L(\Phi(S))}{2}\Bigl\|u-\mathcal{P}_D[u+\tfrac{1}{L(\Phi(S))}\nabla d(u)]\Bigr\|^2.\label{eq:d>=dgdPu1LgduL2uPu1Lgd}
\end{align}
We now look at the right-hand side of \eqref{eq:d>=dgdPu1LgduL2uPu1Lgd}. For each $i\in{\mathcal A}(u)$,
\begin{align*}
&\nabla^{(i)}d(u)\Bigl(\mathcal{P}_{D^{(i)}}[u^{(i)}+\tfrac{1}{L(\Phi(S))}\nabla^{(i)}d(u)]-u^{(i)}\Bigr)\\
-&\frac{L(\Phi(S))}{2}\Bigl(u^{(i)}-\mathcal{P}_{D^{(i)}}[u^{(i)}+\tfrac{1}{L(\Phi(S))}\nabla^{(i)}d(u)]\Bigr)^2=-\nabla^{(i)}d(u)u^{(i)}-\frac{L(\Phi(S))}{2}\Bigl(u^{(i)}\Bigr)^2\displaybreak[0]\\
\ge&-\nabla^{(i)}d(u)u^{(i)}-\frac{L(\Phi(S))}{2}u^{(i)}\Bigl(-\tfrac{1}{L(\Phi(S))}\nabla^{(i)}d(u)\Bigr)=-\frac{1}{2}\nabla^{(i)}d(u)u^{(i)},
\end{align*}
where the inequality is due to $0\le u^{(i)}<-\tfrac{1}{L(\Phi(S))}\nabla^{(i)}d(u)$ $\forall i\in{\mathcal A}(u)$. For each $i\in{\mathcal I}(u)$, 
\begin{align*}
&\nabla^{(i)}d(u)\Bigl(\mathcal{P}_{D^{(i)}}[u^{(i)}+\tfrac{1}{L(\Phi(S))}\nabla^{(i)}d(u)]-u^{(i)}\Bigr)\\
-&\frac{L(\Phi(S))}{2}\Bigl(u^{(i)}-\mathcal{P}_{D^{(i)}}[u^{(i)}+\tfrac{1}{L(\Phi(S))}\nabla^{(i)}d(u)]\Bigr)^2\displaybreak[0]\\
=&\nabla^{(i)}d(u)\cdot\tfrac{1}{L(\Phi(S))}\nabla^{(i)}d(u)-\frac{L(\Phi(S))}{2}\Bigl(\tfrac{1}{L(\Phi(S))}\nabla^{(i)}d(u)\Bigr)^2=\frac{1}{2L(\Phi(S))}\Bigl(\nabla^{(i)}d(u)\Bigr)^2.
\end{align*}
Thus, \eqref{eq:d>=dgdPu1LgduL2uPu1Lgd} gives
\begin{align}
d^\star-d(u)&\ge\frac{1}{2}\Bigl(\sum_{i\in\mathcal{A}(u)}-\nabla^{(i)}d(u)u^{(i)}\Bigr)+\frac{1}{2L(\Phi(S))}\sum_{i\in\mathcal{I}(u)}\bigl(\nabla^{(i)}d(u)\bigr)^2.\label{eq:dd>=12sum-ndu+12Lsumnd}
\end{align}
This lead to
\begin{align}
\sum_{i\in\mathcal{A}(u)}-\nabla^{(i)}d(u)u^{(i)}\le2(d^\star-d(u)).\label{eq:sum-ndu<=2dd}
\end{align}
Moreover, notice that
\begin{align*}
\sum_{i\in\mathcal{I}(u)}\bigl(\nabla^{(i)}d(u)\bigr)^2&\ge\frac{1}{|\mathcal{I}(u)|}\Bigl(\sum_{i\in\mathcal{I}(u)}|\nabla^{(i)}d(u)|\Bigr)^2\ge\frac{1}{m+p}\Bigl(\frac{1}{\|u\|_\infty}\sum_{i\in\mathcal{I}(u)}-\nabla^{(i)}d(u)u^{(i)}\Bigr)^2.
\end{align*}
Also note that for each $i\in\mathcal{A}(u)$, $\nabla^{(i)}d(u)<0$ and thus $-\nabla^{(i)}d(u)u^{(i)}\ge0$. It then follows from \eqref{eq:dd>=12sum-ndu+12Lsumnd} that
\begin{align*}
\sum_{i\in\mathcal{I}(u)}-\nabla^{(i)}d(u)u^{(i)}\le\|u\|_\infty\sqrt{2L(\Phi(S))(m+p)(d^\star-d(u))}.
\end{align*}
Due to this, \eqref{eq:sum-ndu<=2dd}, and \eqref{eq:ff<=-gdu}, we conclude that \eqref{eq:ff<=usqrt2Lmp2sqrtdddd} holds.

Next, we derive a lower bound on $f(\bar{x}(u))-f^\star$. Since $\mathcal{L}(\bar{x}(u),u^\star)\ge\mathcal{L}(\bar{x}(u^\star),u^\star)=f^\star$,
\begin{align}
f(\bar{x}(u))-f^\star&\ge-\Bigl(\sum_{i=1}^mu^{\star(i)}g^{(i)}(\bar{x}(u))\Bigr)-(u^{\star(m+1:m+p)})^T(A\bar{x}(u)+b)\nonumber\displaybreak[0]\\
&\ge-\Bigl(\sum_{i=1}^mu^{\star(i)}\max\{0,g^{(i)}(\bar{x}(u))\}\Bigr)-(u^{\star(m+1:m+p)})^T(A\bar{x}(u)+b)\nonumber\displaybreak[0]\\
&\ge-\|u^\star\|\Delta(\bar{x}(u)).\label{eq:ff>=-usqrtDelta}
\end{align}
To bound $\Delta(\bar{x}(u))$, note from \eqref{eq:generalgradd} that if $g^{(i)}(\bar{x}(u))>0$, then $i\in{\mathcal I}(u)$. In addition, $\{m+1,\ldots,m+p\}\subseteq{\mathcal I}(u)$. Thus, $\bigl(\Delta(\bar{x}(u))\bigr)^2\le\sum_{i\in{\mathcal I}(u)}\bigl(\nabla^{(i)}d(u)\bigr)^2$.
It follows from \eqref{eq:dd>=12sum-ndu+12Lsumnd} that \eqref{eq:Delta<=2Ldd} holds. Due to \eqref{eq:ff>=-usqrtDelta} and \eqref{eq:Delta<=2Ldd} , we obtain \eqref{eq:ff>=-usqrt2Ldd}.

\subsection{Proof of Corollary~\ref{cor:primalfunctionvalueboundedsubgrad}}\label{ssec:proofcorprimalfunctionvalueboundedsubgrad}

Since for any compact $S\subset D$, $L(\Phi(S))\le\tilde{L}$, the corollary follows from Theorem~\ref{thm:primalfunctionvalue}.

\subsection{Proof of Corollary~\ref{cor:xx<=uulinear}}\label{ssec:proofcorxx<=uulinear}
Let $u,v\in\mathbb{R}^{m+p}$. Note that for problem~\eqref{eq:linearprimalprob}, \eqref{eq:tnftnfxx<=sumugugxx} is equivalent to
\begin{align*}
\Bigl(\tilde{\nabla}f(\bar{x}(u))-\tilde{\nabla}f(\bar{x}(v))\Bigr)^T(\bar{x}(u)-\bar{x}(v))\le(u-v)^T\tilde{A}(\bar{x}(v)-\bar{x}(u)).
\end{align*}
Additionally, similar to \eqref{eq:uuAxx<=lambdamaxxxuu}, we obtain
\begin{align*}
(u-v)^T\tilde{A}(\bar{x}(v)-\bar{x}(u))\le\sigma_{\max}(\tilde{A})\|\bar{x}(u)-\bar{x}(v)\|\cdot\|u-v\|.
\end{align*} 
Then, it follows from the strong convexity of $f$ on $X$ that $\|\bar{x}(u)-\bar{x}(v)\|\le\frac{\sigma_{\max}(\tilde{A})}{\theta}\|u-v\|$.

\subsection{Proof of Proposition~\ref{pro:primalfunctionvaluelinear}}\label{ssec:proofproprimalfunctionvaluelinear}

Let $u\in D$ and $u^\star\in D^\star$. We first prove that \eqref{eq:Delta<=2sigmathetadd} is satisfied. Note that $\bigl(\Delta(\bar{x}(u))\bigr)^2\le\|(\tilde{A}\bar{x}(u)+\tilde{b})-(\tilde{A}x^\star+\tilde{b})\|^2=\|\nabla d(u)-\nabla d(u^\star)\|^2$. Further, due to the inequalities $d(u)-d(u^\star)-\nabla d(u^\star)^T(u-u^\star)\le-\frac{\theta}{2\sigma_{\max}^2(\tilde{A})}\|\nabla d(u)-\nabla d(u)^\star\|^2$ \cite[Theorem~2.1.5]{Nesterov04} and $\nabla d(u^\star)^T(u-u^\star)\le0$, we obtain
\begin{align}
\|\nabla d(u)-\nabla d(u^\star)\|^2&\le-\frac{2\sigma_{\max}^2(\tilde{A})}{\theta}(d(u)-d(u^\star)-\nabla d(u^\star)^T(u-u^\star))\nonumber\displaybreak[0]\\
&\le\frac{2\sigma_{\max}^2(\tilde{A})}{\theta}(d(u^\star)-d(u)).\label{eq:ndnd<=2sigmathetadd}
\end{align}
Consequently, \eqref{eq:Delta<=2sigmathetadd} holds. From \eqref{eq:ff<=-gdu}, we have $f(\bar{x}(u))-f^\star\le-\nabla d(u)^Tu+\nabla d(u^\star)^T(u-u^\star)$. This, along with $\nabla d(u^\star)^Tu^\star=0$ \cite[Prop. 6.1.1]{Bertsekas03}, implies that
\begin{align*}
f(\bar{x}(u))-f^\star \le (\nabla d(u^\star)-\nabla d(u))^Tu\le\|u\|\cdot\|\nabla d(u^\star)-\nabla d(u)\|.
\end{align*}
It follows from \eqref{eq:ndnd<=2sigmathetadd}, \eqref{eq:ff>=-usqrtDelta}, and \eqref{eq:Delta<=2sigmathetadd} that \eqref{eq:ff<=u2lambdamaxAAthetadd} holds.

\subsection{Proof of Lemma~\ref{lem:dualgradientLipschitzlargerset}}\label{ssec:prooflemdualgradientLipschitzlargerset}

Let $u\in D$ and $v\in\tilde{D}$. We first show that
\begin{align}
&\|\bar{x}(u)-\bar{x}(v)\|\le\frac{\sqrt{m+1}}{\theta}\|u-v\|\nonumber\displaybreak[0]\\
&\quad\cdot\max\Bigl\{\sigma_{\max}(A),\max_{i\in\{1,\ldots,m\}}\|\nabla g^{(i)}(\bar{x}(u))\|,\max_{i\in\{1,\ldots,m\}}\|\nabla g^{(i)}(\bar{x}(v))\|\Bigr\}.\label{eq:xx<=uulargerset}
\end{align}

To prove \eqref{eq:xx<=uulargerset}, consider two mutually exclusive and exhaustive cases: 

Case (i) $v\in D$. In this case, from \eqref{eq:xx<=uuloose}, we obtain \eqref{eq:xx<=uulargerset}. 

Case (ii) $v\in\tilde{D}-D$. Let $I^-=\{i\in\{1,2,\ldots,m\}:\tilde{u}^{(i)}\le v^{(i)}<0\}\neq\emptyset$ and $w=\mathcal{P}_D[v]$, \emph{i.e.}, $w^{(i)}=0$ $\forall i\in I^-$ and $w^{(i)}=v^{(i)}$ $\forall i\in\{1,2,\ldots,m+p\}\backslash I^-$. Notice from the proof of Lemma~\ref{lem:xx<=cuu} that under Assumption~\ref{asm:inequalityconstraints}, \eqref{eq:xx<=uutight} holds with $u=w$, which gives
\begin{align}
&\|\bar{x}(w)-\bar{x}(v)\|\nonumber\\
\le&\frac{1}{\theta}\Bigl(\sum_{i=1}^m\|\nabla g^{(i)}(\bar{x}(v))\|\cdot|w^{(i)}-v^{(i)}|+\sigma_{\max}(A)\|w^{(m+1:m+p)}-v^{(m+1:m+p)}\|\Bigr)\nonumber\\
=&\frac{1}{\theta}\Bigl(\sum_{i\in I^-}-\|\nabla g^{(i)}(\bar{x}(v))\|v^{(i)}\Bigr).\label{eq:|xbu3xbu2|}
\end{align}
Due again to \eqref{eq:xx<=uutight}, we obtain
\begin{align}
&\|\bar{x}(u)-\bar{x}(w)\|\nonumber\\
\le&\frac{1}{\theta}\Bigl(\sum_{i=1}^m\|\nabla g^{(i)}(\bar{x}(u))\|\cdot|u^{(i)}-w^{(i)}|+\sigma_{\max}(A)\|u^{(m+1:m+p)}-w^{(m+1:m+p)}\|\Bigr)\nonumber\\
=&\frac{1}{\theta}\Bigl(\sum_{i\in I^-}\|\nabla g^{(i)}(\bar{x}(u))\|u^{(i)}+\!\!\!\!\!\!\!\!\!\!\!\!\!\!\sum_{i\in\{1,2,\ldots,m\}\backslash I^-}\!\!\!\!\!\!\!\!\!\!\!\!\!\!\|\nabla g^{(i)}(\bar{x}(u))\|\cdot|u^{(i)}-v^{(i)}|\nonumber\displaybreak[0]\\
&+\sigma_{\max}(A)\|u^{(m+1:m+p)}-v^{(m+1:m+p)}\|\Bigr).\label{eq:|xbu1xbu3|}
\end{align}
From \eqref{eq:|xbu3xbu2|}, \eqref{eq:|xbu1xbu3|}, and the fact that $|u^{(i)}-v^{(i)}|=u^{(i)}-v^{(i)}$ $\forall i\in I^-$, we have
\begin{align*}
&\|\bar{x}(u)-\bar{x}(v)\|\le\|\bar{x}(u)-\bar{x}(w)\|+\|\bar{x}(w)-\bar{x}(v)\|\nonumber\displaybreak[0]\\
\le&\frac{1}{\theta}\Bigl(\sum_{i=1}^m\max\Bigl\{\|\nabla g^{(i)}(\bar{x}(u))\|,\|\nabla g^{(i)}(\bar{x}(v))\|\Bigr\}|u^{(i)}-v^{(i)}|\nonumber\displaybreak[0]\\
&+\sigma_{\max}(A)\|u^{(m+1:m+p)}-v^{(m+1:m+p)}\|\Bigr).
\end{align*}
Akin to the last part of the proof of Lemma~\ref{lem:xx<=cuu}, it can be shown that \eqref{eq:xx<=uulargerset} holds for this case. 
Having proved \eqref{eq:xx<=uulargerset}, we now show that \eqref{eq:gdgd<=Luulargerset} holds. Due to Assumption~\ref{asm:inequalityconstraints}, Lemma~\ref{lem:xbounded} and \eqref{eq:generalgradd} still hold when $D$ is replaced by $\tilde{D}$. Thus, using the proof of Proposition~\ref{pro:gradientdualLipschitz} and \eqref{eq:xx<=uulargerset}, we have \eqref{eq:gdgd<=Luulargerset}.

\subsection{Proof of Lemma~\ref{lem:dualgradientLipschitzineq}}\label{ssec:prooflemdualgradientLipschitzineq}

Let $u\in S$ and $v\in\Psi(S)$. Since $\Psi(S)$ is convex and $S\subseteq\Psi(S)$, $u+\tau(v-u)\in\Psi(S)$ $\forall \tau\in[0,1]$. Then, from Lemma~\ref{lem:dualgradientLipschitzlargerset}, 
\begin{align*}
\|\nabla d(u+\tau(v-u))-\nabla d(u)\|\le\tau L(\Psi(S))\|u-v\|,\quad\forall \tau\in[0,1]. 
\end{align*}
It then follows from the proof of \cite[Lemma 1.2.3]{Nesterov04} that \eqref{eq:ddgduu>=-M2uu} holds. 

Next, we prove \eqref{eq:ddgduu<=-M2eta1etagdgd} and \eqref{eq:gdgduu<=-2M2eta1etagdgd}. Let $u,v\in S$. Note that if $\eta(S)=0$, then $\nabla d(u)=\nabla d(v)$. Thus, due to the concavity of $d$, \eqref{eq:ddgduu<=-M2eta1etagdgd} and \eqref{eq:gdgduu<=-2M2eta1etagdgd} hold. Now assume $\eta(S)>0$. To prove \eqref{eq:ddgduu<=-M2eta1etagdgd}, we utilize the idea from the proof of \cite[Theorem~2.1.5]{Nesterov04}. We define a function $\phi:\Psi(S)\rightarrow\mathbb{R}$ such that $\phi(w)=d(w)-\nabla d(u)^Tw$. Note that $\Phi$ is concave and $\nabla\Phi(u)=0$, which implies $\phi(u)\ge\phi(w)$ $\forall w\in\Psi(S)$. In addition, $v+\frac{1}{\eta}\nabla\phi(v)=v+\frac{1}{\eta}(\nabla d(v)-\nabla d(u))\in\Psi(S)$ $\forall\eta\ge\eta(S)$. It follows from \eqref{eq:ddgduu>=-M2uu} that
\begin{align*}
&\phi(u)\ge\phi(v+\frac{1}{\eta}\nabla\phi(v))\\
&=d(v+\frac{1}{\eta}\nabla\phi(v))-d(v)-\nabla d(v)^T\frac{1}{\eta}\nabla\phi(v)+\phi(v)+(\nabla d(v)-\nabla d(u))^T\frac{1}{\eta}\nabla\phi(v)\\
&\ge-\frac{L(\Psi(S))}{2}\|\frac{1}{\eta}\nabla\phi(v)\|^2+\phi(v)+\frac{1}{\eta}\|\nabla\phi(v)\|^2,\quad\forall\eta\ge\eta(S),
\end{align*}
which is equivalent to \eqref{eq:ddgduu<=-M2eta1etagdgd}. Moreover, \eqref{eq:gdgduu<=-2M2eta1etagdgd} can be obtained from \eqref{eq:ddgduu<=-M2eta1etagdgd} by interchanging $u$ and $v$ and adding the two inequalities.

\subsection{Proof of Theorem~\ref{thm:dualprimalconvrate}}\label{ssec:proofthmdualprimalconvrate}

Let $u^\star\in D^\star$ and $\alpha$ satisfy \eqref{eq:stepsize}. We first prove that $u_k\in D_0$ $\forall k\ge0$ by induction. Clearly, $u_0\in D_0$. Suppose $u_k\in D_0$ for some $k\ge0$. Then, for any $\eta>0$ such that $\eta\ge\eta(D_0)$,
\begin{align*}
\|u_{k+1}-u^\star\|^2&=\|\mathcal{P}_D[u_k+\alpha\nabla d(u_k)]-\mathcal{P}_D[u^\star+\alpha\nabla d(u^\star)]\|^2\\
&\le\|u_k+\alpha\nabla d(u_k)-u^\star-\alpha\nabla d(u^\star)\|^2\displaybreak[0]\\
&=\|u_k-u^\star\|^2+2\alpha(\nabla d(u_k)-\nabla d(u^\star))^T(u_k-u^\star)+\alpha^2\|\nabla d(u_k)-\nabla d(u^\star)\|^2\displaybreak[0]\\
&\le\Bigl(4\alpha\Bigl(\frac{L(\Psi(D_0))}{2\eta^2}-\frac{1}{\eta}\Bigr)+\alpha^2\Bigr)\|\nabla d(u_k)-\nabla d(u^\star)\|^2+\|u_k-u^\star\|^2,
\end{align*}
where the last inequality is due to \eqref{eq:gdgduu<=-2M2eta1etagdgd} and $u^\star\in D_0$. Notice that over the set $\{\eta>0:\eta\ge\eta(D_0)\}$, if $L(\Psi(D_0))>\eta(D_0)$, then $4\Bigl(\frac{L(\Psi(D_0))}{2\eta^2}-\frac{1}{\eta}\Bigr)$ achieves its minimum $-\frac{2}{L(\Psi(D_0))}$ at $\eta=L(\Psi(D_0))$; otherwise, $4\Bigl(\frac{L(\Psi(D_0))}{2\eta^2}-\frac{1}{\eta}\Bigr)$ reaches the minimum $4\Bigl(\frac{L(\Psi(D_0))}{2\eta(D_0)^2}-\frac{1}{\eta(D_0)}\Bigr)$ at $\eta=\eta(D_0)>0$. Hence, \eqref{eq:stepsize} leads to $4\alpha\Bigl(\frac{L(\Psi(D_0))}{2\eta^2}-\frac{1}{\eta}\Bigr)+\alpha^2<0$. Therefore, $\|u_{k+1}-u^\star\|\le\|u_k-u^\star\|$, which means that $u_{k+1}\in D_0$. This completes the proof by induction. With this property, we now prove \eqref{eq:gpmdualconvrate}. Because of Proposition~\ref{pro:gradientdualLipschitz} and because $D_0\subseteq\Psi(D_0)$, $\nabla d$ satisfies a Lipschitz condition on $D_0$ with Lipschitz constant no more than $L(\Psi(D_0))$. It then follows from the proof of \cite[Theorem~5.1]{Levitin66} that
\begin{align*}
&d(u_k)-d(u_{k+1})\le-\delta\|u_{k+1}-u_k\|^2,\\
&d^\star-d(u_k)\le\sqrt{\rho}\|u_{k+1}-u_k\|.
\end{align*}
Also, from \eqref{eq:stepsize}, we have $\alpha<\frac{2}{L(\Psi(D_0))}$ and thus $\delta>0$. Consequently, 
\begin{align*}
d^\star-d(u_{k+1})\le d^\star-d(u_k)-\frac{\delta}{\rho}(d^\star-d(u_k))^2.
\end{align*}
Then, from \cite[Lemma~6, Sec.~2.2]{Polyak87}, \eqref{eq:gpmdualconvrate} holds. Also, \eqref{eq:gpmdualconvrate} and \eqref{eq:xx<=sqrtddtheta} give \eqref{eq:gpmprimalconvdist}. Moreover, note that $\|u_k\|\le\|u^\star\|+\|u_0-u^\star\|$ $\forall k\ge0$. Then, \eqref{eq:gpmprimalconvratefunc} holds due to this, \eqref{eq:ff<=usqrt2Lmp2sqrtdddd}, and \eqref{eq:gpmdualconvrate}. Furthermore, \eqref{eq:gpmprimalconvratefunclb} comes from \eqref{eq:gpmdualconvrate} and \eqref{eq:ff>=-usqrt2Ldd}. Finally, \eqref{eq:gpmdualconvrate} and \eqref{eq:Delta<=2Ldd} yield \eqref{eq:gpminfconvrate}.

\subsection{Proof of Proposition~\ref{pro:fgdualprimalconvrate}}\label{ssec:proofprofgdualprimalconvrate}
Inequality \eqref{eq:fgdualconvrate} is derived in the proof of \cite[Corrolary~1]{Tseng08}, which, along with \eqref{eq:xx<=sqrtddtheta} in Theorem~\ref{thm:xx<=cuu}, gives \eqref{eq:fgprimalconvratedist}. In addition, from Corollary~\ref{cor:primalfunctionvalueboundedsubgrad}, we obtain \eqref{eq:fgprimalconvratefunc}, \eqref{eq:fgprimalconvratefunclb}, and \eqref{eq:fginfconvrate}.

\subsection{Proof of Proposition~\ref{pro:fgdualprimalconvrateineq}}\label{ssec:proofprofgdualprimalconvrateineq}
From \cite[Lemma~1]{Nedic09b}, for any $u\in\{u'\ge0:d(u')\ge d(\bar{u})\}$,
\begin{align*}
\|u\|_\infty\le\|u\|\le\frac{d(\bar{u})-f(\tilde{x})}{\max_{i\in\{1,2,\ldots,m\}}g^{(i)}(\tilde{x})}.
\end{align*}
Also, due to \eqref{eq:fgdualconvrate}, we have $d(u_k)\ge d(\bar{u})$ when $k+1\ge\Bigl(\frac{4\tilde{L}Q(u^\star,w_0)}{d^\star-d(\bar{u})}\Bigr)^{1/2}$ and $k\ge1$. It then follows from \eqref{eq:fgprimalconvratefunc} that \eqref{eq:fgprimalconvratefuncineq} holds.

\subsection{Proof of Proposition~\ref{pro:fgdualprimalconvratewholdspace}}\label{ssec:proofprofgdualprimalconvratewholdspace}
Inequality \eqref{eq:fgdualconvratewholespace} comes from \cite[Corollary~2]{Tseng08} and Proposition~\ref{pro:primalfunctionvaluelinear}. Moreover, because of \eqref{eq:xx<=sqrtddtheta}, \eqref{eq:ff<=u2lambdamaxAAthetadd}, and \eqref{eq:Delta<=2sigmathetadd}, we obtain \eqref{eq:fgprimalconvratedistwholespace}, \eqref{eq:fgprimalconvratefuncwholespace}, and \eqref{eq:fginfconvratewholespace}. Similar to the proof of Proposition~\ref{pro:fgdualprimalconvrateineq}, we have $d(u_k)\ge d(\bar{u})$ if $k+1\ge\sigma_{\max}(\tilde{A})\|u_0-u^\star\|\left(\frac{2\theta^{-1}}{d^\star-d(\bar{u})}\right)^{1/2}$ and $k\ge1$, implying that \eqref{eq:fgprimalconvratefuncineqwholespace} holds.

\bibliographystyle{IEEEtran}
\bibliography{reference2}

\end{document}